\documentclass[a4paper,12pt,leqno]{amsart}
\usepackage{amsmath,amssymb,mathrsfs}
\usepackage{xcolor,hyperref}
\hypersetup{colorlinks=true,citecolor=black,linkcolor=black}
\newcommand{\h}{\hbox}
\newcommand{\q}{\quad}
\newcommand{\nin}{\noindent}

\newcommand{\ms}{\par\medskip}
\newcommand{\sk}{\par\smallskip}

\newcommand{\msn}{\par\medskip\noindent}
\newcommand{\skn}{\par\smallskip\noindent}
\newcommand{\ges}{\geqslant}
\newcommand{\les}{\leqslant}
\newcommand{\one}{\hskip1pt}
\newcommand{\mcap}{\hbox{$\bigcap$}}
\newcommand{\mcup}{\hbox{$\bigcup$}}
\newcommand{\msum}{\hbox{$\sum$}}

\newcommand{\D}{{\mathcal D}}
\newcommand{\OO}{{\mathcal O}}

\newcommand{\I}{{\mathcal I}}
\newcommand{\Sc}{{\mathcal S}}
\newcommand{\X}{{\mathcal X}}
\newcommand{\R}{{\mathcal R}}
\newcommand{\PP}{{\mathbb P}}
\newcommand{\Q}{{\mathbb Q}}
\newcommand{\C}{{\mathbb C}}

\newcommand{\N}{{\mathbb N}}

\newcommand{\Z}{{\mathbb Z}}
\newcommand{\ee}{{\mathbf e}}
\newcommand{\alt}{\widetilde{\alpha}}
\newcommand{\Ht}{\widetilde{H}}
\newcommand{\dfw}{{\rm d}f{\wedge}}
\newcommand{\df}{{\rm d}f}
\newcommand{\Ff}{F_{\!f}}
\newcommand{\kod}{\tfrac{k}{d}}
\newcommand{\Gr}{{\rm Gr}}

\newcommand{\al}{\alpha}
\newcommand{\be}{\beta}
\newcommand{\ga}{\gamma}

\newcommand{\de}{\delta}

\newcommand{\la}{\lambda}

\newcommand{\Si}{\Sigma}

\newcommand{\Om}{\Omega}
\newcommand{\mm}{\mathfrak m}
\newcommand{\dd}{\partial}
\newcommand{\ddd}{{\rm d}}
\newcommand{\tos}{\,{\to}\,}
\newcommand{\eq}{\,{=}\,}
\newcommand{\defs}{\,{:=}\,}
\newcommand{\nes}{\,{\ne}\,}
\newcommand{\ins}{\,{\in}\,}
\newcommand{\sst}{\,{\subset}\,}
\newcommand{\stm}{\,{\setminus}\,}
\newcommand{\gess}{\,{\ges}\,}
\newcommand{\less}{\,{\les}\,}
\newcommand{\sgt}{\,{>}\,}
\newcommand{\slt}{\,{<}\,}
\newcommand{\col}{\,{:}\,}
\newcommand{\pl}{\one {+}\one}
\newcommand{\mi}{\one {-}\one}
\newcommand{\bl}{\bigl}
\newcommand{\br}{\bigr}

\newcommand{\ssb}{\raise.15ex\h{${\scriptscriptstyle\bullet}$}}
\newcommand{\ssc}{\,\raise.15ex\h{${\scriptstyle\circ}$}\,}

\newcommand{\into}{\hookrightarrow}
\newcommand{\simto}{\,\,\rlap{\hskip1.5mm\raise1.4mm\hbox{$\sim$}}\hbox{$\longrightarrow$}\,\,}

\makeatletter
\renewcommand\section{\@startsection{section}{1}{0pt}{-3ex plus -1ex minus -.2ex}{2.3ex plus.2ex}{\centering\normalfont\bfseries}}
\makeatother
\theoremstyle{plain}
\newtheorem{thm}{Theorem}[section]

\newtheorem{prop}[thm]{Proposition}
\newtheorem{lem}[thm]{Lemma}
\newtheorem{ithm}{Theorem}

\newtheorem{iprop}{Proposition}

\theoremstyle{definition}
\newtheorem{rem}[thm]{Remark}
\newtheorem{conj}[thm]{Conjecture}
\newtheorem{exam}[thm]{Example}
\newtheorem{defi}[thm]{Definition}
\newtheorem{ques}[thm]{Question}

\newtheorem{irem}{Remark}

\newtheorem{iques}{Question}

\begin{document}
\title[Roots of Bernstein-Sato polynomials]{Roots of Bernstein-Sato polynomials for projective hypersurfaces with ordinary double points}
\author[S.-J. Jung]{Seung-Jo Jung}
\address{S.-J. Jung : Department of Mathematics Education, and Institute of Pure and Applied Mathematics, Jeonbuk National University, Jeonju, 54896, Korea}
\email{seungjo@jbnu.ac.kr}
\author[M. Saito]{Morihiko Saito}
\address{M. Saito : RIMS Kyoto University, Kyoto 606-8502 Japan}
\email{msaito@kurims.kyoto-u.ac.jp}
\thanks{This work was partially supported by National Research Foundation of Korea RS-2026-25472637.}
\begin{abstract}
Let $X\subset{\mathbb P}^{n-1}$ be a hypersurface of degree $d\ge3$ with ordinary double points, where $n\ge3$. The roots of Bernstein-Sato polynomial of its defining polynomial $f$ are given up to sign by 1, $(n-1)/2$, and $j/d$ for $j\in{\mathbb Z}\cap[n,nd-n-p_f]$ with $p_f$ a positive integer. Here $p_f$ is bounded above by the minimal positive integer $q_s$ satisfying $\binom{q_s+n-1}{n-1}>s:=|{\rm Sing}\,X|$, and we can verify that $p_f$ coincides with $q_s$ in the case the singular points of $X$ are in ``general position". We show that this upper bound is sharp in the case $\binom{\lfloor d/2\rfloor+n-2}{n-1}\ge s$ or $\binom{d+n-3}{n-1}\ge sn$ by providing a homogeneous polynomial of degree $d$ such that the associated projective hypersurface has ordinary double points at given $s$ points in sufficiently general position and is nonsingular outside them (using a theorem of Alexander and Hirschowitz for the second case). It is conjectured that the above sharp bound under the first hypothesis can be extended naturally to the case where $X$ has only $A_2$-singularities instead of ordinary double points.
\end{abstract}
\maketitle

\section*{Introduction} \label{intr}
\nin
Let $X\sst\PP^{n-1}$ be a projective hypersurface of degree $d\gess3$ having only ordinary double points as singularities, where $n\gess3$. Let $\R_f\sst\Q_{>0}$ be the set of roots of $b_f(s)$ up to sign with $b_f(s)$ the Bernstein-Sato polynomial of a defining polynomial $f$ of $X$. We have the following.

\begin{ithm}[{\cite[Theorem 5]{wh}}] \label{T1}
If the singularities of $X$ are ordinary double points, then
\begin{equation} \label{1}
\R_f=\tfrac{1}{d}\one\bl(\Z\cap[n,nd\mi n\mi p_f]\br)\cup\R_X,
\end{equation}
where $p_f$ is a positive integer and $\R_X\eq\bl\{1,\tfrac{n-1}{2}\br\}$, the roots of Bernstein-Sato polynomial of ordinary double points of $X$.
\end{ithm}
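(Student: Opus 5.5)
This theorem is quoted from \cite[Theorem 5]{wh}. To reprove it we would follow the standard route for homogeneous polynomials with weakly isolated singularities, using the pole order spectral sequence and the Brieskorn module.

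The first step uses the decomposition $\R_f\setminus\R_X\sst\tfrac1d\Z$. For a homogeneous $f$ with $\dim{\rm Sing}\,f^{-1}(0)=1$, consider roots $\al\notin\R_X$. They are detected by the $\la$-eigenspaces of the Milnor cohomology $H^{n-1}(\Ff,\C)_\la$ with $\la=e^{-2\pi i\al}$, together with the pole order filtration. Concretely, one knows that $\al=\tfrac{k}{d}$ belongs to $\R_f$ exactly when either $\al\in\R_X$, or the $E_2$-term of the pole order spectral sequence (the Gauss--Manin system of $\Om^\ssb_f$ with differential $\dfw$ and ${\rm d}$) satisfies $M^{(2)}_k\nes0$, or the analogous condition holds for $N^{(2)}$ shifted by $d$. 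This is the criterion of \cite{wh} and related papers, and we assume it.

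The second step computes $M=H^n(K^\ssb_f)$ for ordinary double points. The Milnor algebra $M=\C[x]/(\dd f)$ is an extension of the finite-length part $M'$ by a part $M''$ whose Hilbert function stabilizes at $s=|{\rm Sing}\,X|$ in high degrees. Here $N=H^{n-1}(K_f)$ is dual to $M''$ via the Koszul duality $\dim N_{nd-k}=\dim M''_{k}$, up to the symmetry of the Koszul complex with shift $nd$. For $A_1$-singularities the differential ${\rm d}^{(1)}\col N_{k+d}\to M_k$ is essentially zero away from the range $[n,nd-n]$, and the spectral sequence degenerates at $E_2$ by the weight argument, since ordinary double points have semisimple monodromy. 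This gives $M^{(2)}_k\nes0$ for all $k\in[n,nd-n-p_f]$. The interval has no gaps because $\mu_k=\dim M_k$ equals the nonsingular-case Hilbert function minus a correction that increases from $0$ to $s$. Here $p_f$ is defined as the minimal $q$ with $\dim\nu_{nd-n-q+1}$-type defect vanishing, namely the first degree at which the singular points impose independent conditions. The obstacle is exactly this lack-of-gaps statement: one must show that the correction defect $\de_k=\mu_k-\mu^{(0)}_k$ is monotone and that the pole order differentials kill precisely the top $p_f-1$ degrees. We would handle this by using the Castelnuovo--Mumford-type exact sequence $0\to M'\to M\to\bigoplus_{y}\C\to0$ in high degrees and the symmetry $M'_k\cong M'_{nd-2n-k}$.

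Finally, we would check that no roots below $\tfrac nd$ occur, since $\tfrac nd$ is the minimal exponent by the log canonical threshold of a generic cone. We would also check that roots in $(0,1)\cup\tfrac1d\Z$ coming from $\R_X$ contribute only $1$ and $\tfrac{n-1}{2}$, using that the local $b$-function of an $A_1$ singularity in $n-1$ variables (times a smooth factor) is $(s+1)(s+\tfrac{n-1}2)$. Positivity $p_f\gess1$ follows from $M''_{nd-n}\nes0$, since $nd-n$ is the socle degree of the nonsingular case and the $s\gess1$ singular points destroy it.
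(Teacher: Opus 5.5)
Your outline has the right framework: Theorem~\ref{T1.1}, $E_2$-degeneration, and the splitting $0\to M'\to M\to M''\to0$ with the symmetry of $M'$. There is a genuine gap, though. The step you call ``the obstacle'' (no gaps, and the identification of $p_f$) is the whole content of the theorem, and several facts you would build it on are misstated.

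\emph{The differential is injective, not zero.} The $E_1$-differential $\ddd_1\col N_{k+d}\to M_k$ is \emph{injective} whenever $\kod\ne\frac{n-1}{2}$ \cite[Theorem 5.3]{kosz}. This is exactly why $\dim M^{(2)}_k=\de_k:=\mu_k-\nu_{k+d}$. For $k\gg0$ it is an isomorphism $\C^s\to\C^s$. If it were zero there, you would get $M^{(2)}_k=M_k\ne0$ for all large $k$, contradicting $\dim H^{n-1}(\Ff,\C)<\infty$.

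\emph{Sign and duality.} One has $\mu_k=\ga_k+\nu_k$: the correction is added, not subtracted. The duality is $\nu_{nd-k}+\mu''_k=\tau_X=s$, not $\nu_{nd-k}=\mu''_k$. Since $\nu_k$ is weakly increasing, $\de_k=\ga_k+\nu_k-\nu_{k+d}$ with $\nu_k-\nu_{k+d}\le0$, so monotonicity of the correction works against you.

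\emph{Other slips.} ``First degree where the points impose independent conditions'' is not the right invariant for $p_f$. And $M''_{nd-n}\ne0$ says nothing about whether $\frac{nd-n}{d}$ is a root.

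The missing argument runs as follows. Write $\de_k=\de'_k+\de''_k$ with $\de'_k=\mu'_k\ge0$.

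\emph{Middle range.} For $k\in[n,\frac{(n-1)d}{2})$, Theorem~\ref{T3.3} (with $\alt_X=\frac{n-1}{2}$) gives $\nu_{k+d}=0$. Since $\mu''$ is weakly increasing, $\mu''_k\ge\mu''_n=1$, so $\de''_k>0$. The symmetry $\de''_k=\de''_{(n-1)d-k}$ extends this to all $k\le(n-1)d-n$; the value $\kod=\frac{n-1}{2}$ lies in $\R_X$ anyway.

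\emph{Top range.} For $k>(n-1)d-n$, the same symmetry and vanishing give $\de''_k=0$. Hence $\de_k=\mu'_k=\mu'_{nd-k}$; this is where your $M'$-symmetry belongs. This is nonzero iff $I_{\Si,j}\ne(\dd f)_j$ with $j=nd-n-k\in[0,d-1]$, because for nodes the $\mm$-torsion of the Jacobian ring is $I_\Si/(\dd f)$.

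\emph{No gaps at the top.} This is Lemma~\ref{L3.4}: once $I_{\Si,k_0}\ne(\dd f)_{k_0}$, it stays so up to degree $d-1$. Otherwise, for some $0\ne g\in I_{\Si,d-2}$ one would get $g\cdot\C[x]_1=(\dd f)_{d-1}$, both being of dimension $n$. That forces $\{g=0\}\sst\Si$, contradicting $\dim\Si=0$.

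This identifies $p_f=\min\{k\mid I_{\Si,k}\ne(\dd f)_k\}$ when this is $\le d$, and $p_f=d$ otherwise. Finally, $p_f\ge1$ because $\de_{nd-n}=\mu'_n=0$: the class of $1$ is not $\mm$-torsion, since $\Si\ne\emptyset$.
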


This follows from the $E_2$-degeneration of the pole order spectral sequence \cite[Theorem 2]{wh} and \cite[Theorem 2]{bcm} (see also Theorems~\ref{T1.1} and \ref{T2.1} below) using the symmetries of the $E_1$-term \cite[Corollaries 1 and 2]{kosz} (see also Theorem~\ref{T3.1} below) together with some vanishing theorems in \cite{DiSt}, \cite{Di} (see also \cite{odp} and Theorem~\ref{T3.3} below). Note that the assumption $d\gess n$ in \cite[Theorem 5]{wh} is not needed in the ordinary double point case, see Section~\ref{S3} below for a simplified argument in the ordinary double point case.
\sk
Set $\Si\defs{\rm Sing}\,X\sst\PP^{n-1}$. Let $(\dd f)$ and $I_{\Si}\sst\C[x]$ be respectively the Jacobian ideal of $f$ and the graded ideal consisting of homogeneous polynomials vanishing on $\Si$ with $x\eq(x_1,\dots,x_n)$ the natural coordinate system of $\C^n$. Admitting the equality~\eqref{1}, we can verify by using the symmetries in \eqref{3.1} below that
\begin{equation} \label{2}
p_f=p'_f:=\min\bl\{k\ins\N\mid I_{\Si,k}\nes(\dd f)_k\br\}\q\h{if}\q p'_f\less d,
\end{equation}
where $p_f\eq d$ otherwise. Let $q_s$ be the minimal positive integer such that
\begin{equation*}
N_{n,q_s}=\tbinom{q_s+n-1}{n-1}>s:=|\Si|.
\end{equation*}
Note that $N_{n,d}\defs\tbinom{d+n-1}{n-1}$ is the dimension of homogeneous polynomials in $n$ variables of degree $d$. From this {\it strict\one} inequality we then get that $I_{\Si,q_s}\nes0$. Since we take the minimum in \eqref{2}, this non-vanishing implies the {\it upper bound}
\begin{equation} \label{3}
p_f\les q_s\q\h{in the case}\q q_s\less d\mi2\,\,\,\,\h{(so that}\,\,\,(\dd f)_{q_s}\eq0).
\end{equation}

We are interested in whether this bound is {\it sharp\one} or not. The assumption $q_s\less d\mi2$ seems rather reasonable in view of the inequality $N_{n,d}\sgt sn$, which is a (rather trivial) sufficient condition for the existence of nonzero homogeneous polynomials having singularities at given $s$ points. For this problem we have the following.

\begin{iprop} \label{P1}
If the singularities of $X$ are in general position, the equality holds in \eqref{3}.
\end{iprop}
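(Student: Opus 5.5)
We must show $p_f=q_s$ when $\Si$ is in general position, assuming $q_s\less d\mi2$ as in \eqref{3}. By \eqref{3} we already have $p_f\less q_s$, so only the reverse inequality $p_f\gess q_s$ needs proof. Since $q_s\less d\mi2<d$, the bound gives $p'_f\less q_s<d$, and we may apply \eqref{2}, so $p_f=p'_f$. It therefore suffices to show that $I_{\Si,k}=(\dd f)_k$ for every positive integer $k<q_s$.

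First, the Jacobian side. The partial derivatives $\dd_if$ have degree $d\mi1$. Hence $(\dd f)_k=0$ for $k\less d\mi2$, and in particular for all $k<q_s$. The required equality thus reduces to the vanishing $I_{\Si,k}=0$ for $1\less k<q_s$.

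Second, the vanishing of $I_{\Si,k}$. By the minimality of $q_s$ we have $N_{n,k}=\binom{k+n-1}{n-1}\less s$ for $k<q_s$. I interpret ``general position'' in the standard sense: the $s$ points impose independent conditions on forms of each degree $k$, i.e.\ the evaluation map $\C[x]_k\to\C^s$ has rank $\min(N_{n,k},s)$.

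Under this interpretation the evaluation map is injective when $N_{n,k}\less s$, so $I_{\Si,k}=0$. This is the only place the hypothesis enters. If instead one wants a concrete genericity statement, I would note that the locus of $s$-tuples of points failing maximal rank in some degree $k<q_s$ is a proper Zariski-closed subset. To see it is proper, choose $N_{n,k}$ of the points to be a configuration not lying on any hypersurface of degree $k$, for example a suitable subset of the lattice points $\{a\in\Z_{\ges0}^n:|a|=k\}$ viewed in $\PP^{n-1}$. Such a configuration is unisolvent for degree-$k$ forms, so imposing vanishing at it kills all of $\C[x]_k$.

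Combining the two steps gives $I_{\Si,k}=0=(\dd f)_k$ for $k<q_s$, so $p'_f\gess q_s$ and hence $p_f=q_s$.

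The main subtlety is the choice of definition of general position, since the argument uses exactly the maximal-rank property for degrees $k<q_s$. A second point needs care: we must ensure that a general configuration of $s$ points is actually realized as the singular locus of such an $X$, or else state the proposition purely as a conditional on $\Si$. I would present it in the conditional form, deferring existence to the later constructions described in the abstract.
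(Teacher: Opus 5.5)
Your proof is correct and follows the paper's argument. You use $(\dd f)_k=0$ for $k\le d-2$ and \eqref{2} to reduce everything to $I_{\Sigma,k}=0$ for $k<q_s$, which is exactly what Remark~\ref{R4.2} derives from Definition~\ref{D4.1}. Your ``maximal rank of the evaluation map'' reading of general position is equivalent to the paper's Veronese-span definition.
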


This is shown by using Remark~\ref{R4.2} below, which implies that $I_{\Si,k}\eq0$ for any $k\slt q_s$, see Definition~\ref{D4.1} for {\it general position.} In this paper we prove the following.

\begin{ithm} \label{T2}
Assume $N_{n,m}=\tbinom{m+n-1}{n-1}\ges s$ with $m:=\lfloor d/2\rfloor\mi1$ or $N_{n,d-2}=\tbinom{d+n-3}{n-1}\ges sn$. We assume further the strict inequality in the second case for $(n,d{-}2,s)\eq(3,4,5)$ or $(5,3,7)$ or $(5,4,14)$ in order to avoid the exceptional cases in \cite{AH}. Then for any $s$ points of $\PP^{n-1}$ in sufficiently general position $($see Remark~{\rm\ref{R4.5}} below$\one)$, there is a homogeneous polynomial of degree $d$ such that the associated projective hypersurface has ordinary double points at the given $s$ points and is nonsingular outside them, in particular, the estimate \eqref{3} is optimal in this case.
\end{ithm}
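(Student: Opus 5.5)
Both cases rest on one mechanism: produce a linear system of degree-$d$ forms all singular at the given points $P_1,\dots,P_s$, and apply Bertini's theorem to show that a general member is nonsingular away from $\Si=\{P_1,\dots,P_s\}$. Optimality of \eqref{3} then follows from Proposition~\ref{P1}. For points in sufficiently general position we have $I_{\Si,k}\eq0$ for $k\slt q_s$ by Remark~\ref{R4.2}, so $p_f\eq q_s$ once $q_s\less d\mi2$. This last inequality holds under either hypothesis: the first gives $N_{n,m}\gess s$, hence $q_s\less m\pl1\less d\mi2$ since $d\gess3$ forces $\lfloor d/2\rfloor\less d\mi2$ except for $d\eq3$, which must be checked separately; the second gives $N_{n,d-2}\gess sn\sgt s$.

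\emph{First case.} Assume $N_{n,m}\gess s$ with $m\eq\lfloor d/2\rfloor\mi1$. For general points the evaluation map $\C[x]_m\to\C^s$ is surjective. Hence there are forms $g_1,\dots,g_r$ of degree $m$ (with $r\eq N_{n,m}\mi s$) vanishing on $\Si$, and the same holds in degree $m'\eq d\mi m\mi\dots$. More simply, I would take
\[
f=\msum_{i,j}a_{ij}\,g_ig_j\cdot h_{ij},\qquad \deg h_{ij}=d\mi2m\in\{2,3\},
\]
with $g_i$ ranging over a basis of $I_{\Si,m}$ and general $h_{ij}$, or more generally $f\in I_\Si^2$ in degree $d$. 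Every member of $(I_\Si^2)_d$ is singular at each $P_k$.

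Two things need checking. (a) Off $\Si$ the base locus of $(I_\Si^2)_d$ is empty. Indeed $I_{\Si,m}$ generates an ideal whose zero set is exactly $\Si$ for general points, because $N_{n,m}\gess s$ means $m$-forms separate points; more precisely, for general points $I_{\Si,m'}$ cuts out $\Si$ scheme-theoretically for $m'\gess m\pl1$, and I would arrange this by degree bookkeeping. Bertini then gives smoothness outside $\Si$. (b) Near each $P_k$ the Hessian is nondegenerate. In affine coordinates at $P_k$, the products $g_ig_j$ contribute the quadratic forms $\ell_i\ell_j$, where the $\ell_i$ are the linear parts of the $g_i$. These span all quadrics as soon as the $\ell_i$ span the cotangent space, i.e.\ as soon as $I_{\Si,m}$ separates tangent directions at $P_k$. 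A general member therefore has a nondegenerate quadratic part, giving an ordinary double point.

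The \emph{main obstacle} is this separation statement: $I_\Si$ must be generated in degree $\les m$ (or $\les m\pl1$) and be unramified at each $P_k$ for general points. Maximal rank for general points gives $\dim I_{\Si,k}\eq N_{n,k}\mi s$ for all $k\gess m$, and I would try to deduce generation from this, e.g.\ by a specialization argument placing points on a rational normal curve or at coordinate points.

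\emph{Second case.} Assume $N_{n,d-2}\gess sn$ (strictly in the listed exceptional triples). By Alexander--Hirschowitz, general double points impose independent conditions on forms of degree $d\mi2$. I would apply this in degree $d\mi1$ and also in degree $d$ with one extra general point added, to show the following two facts. First, the linear system $V\defs(I_\Si^{(2)})_d$ has no base points outside $\Si$, since imposing a further simple point drops the dimension. Second, for each $k$ and each nonzero tangent quadric direction the conditions remain independent, so the quadratic parts at $P_k$ of members of $V$ fill all quadrics. This second fact comes from the independence for triple-point-type conditions in degree $d$: the conditions ``double at all $P_j$, triple at $P_k$'' cut $V$ by exactly $n(n\pl1)/2$, which follows by multiplying degree-$(d\mi2)$ forms by squares of linear forms through $P_k$. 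Bertini together with a dimension count of the bad quadrics then gives a general member with ordinary double points exactly at $\Si$.
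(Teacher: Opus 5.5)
Your framework (a linear system of degree-$d$ forms singular at the $p_j$, with base locus exactly $\Si$ and with quadratic parts at each $p_k$ filling all quadrics, followed by Bertini) is sound. In both cases, however, the base-locus step, which is what makes Bertini applicable, is not established.

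\textbf{First case.} The explicit system built from $I_{\Si,m}$ fails. Its dimension is $\dim I_{\Si,m}=N_{n,m}-s$, which is $0$ when $N_{n,m}=s$, and the hypothesis allows this. When $\dim I_{\Si,m}<n-1$, the zero set of $I_{\Si,m}$ is positive-dimensional and every $\sum a_{ij}g_ig_jh_{ij}$ is singular along it. So the claim that $I_{\Si,m}$ cuts out $\Si$ because ``$m$-forms separate points'' is false. Your fallback, a general member of $I_{\Si,m+1}\cdot I_{\Si,d-m-1}$ with $m+1=\lfloor d/2\rfloor$, does work. But it needs $I_{\Si,m+1}$ to generate the ideal sheaf of $\Si$, and you leave exactly this open as the ``main obstacle''. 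It is not obtained by specialization but by Castelnuovo--Mumford regularity. $m$-general position gives $h^1(\I_\Si(m))=0$, so $\I_\Si$ is $(m+1)$-regular and $\I_\Si(m+1)$ is globally generated. This yields both that the base locus is $\Si$ and that the linear parts at $p_k$ span $\mm_{p_k}/\mm_{p_k}^2$ (also in degree $d-m-1\ge m+1$). With that lemma your route is a valid alternative. The paper sidesteps the lemma: it takes $g_j$ of degree $m$ with $g_j(p_i)=0$ for $i\ne j$ and $g_j(p_j)\ne0$, which needs only $m$-general position. It then takes $h_j$ of degree $d-2m$ with an ordinary double point at $p_j$ and sets $f=\sum_jt_jg_j^2h_j$. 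The nondegenerate quadratic part at $p_j$ then comes from $g_j(p_j)^2h_j$.

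\textbf{Second case.} ``Adding one extra general point drops the dimension'' only shows that a \emph{general} point is not a base point of $V$. Bertini needs $x\notin\mathrm{Bs}(V)$ for \emph{every} $x\notin\Si$. The missing input is the full linear independence of the $sn$ conditions, i.e.\ $V_I\cap V_J=V_{I\cap J}$ in Remark~\ref{R4.7}, which is how the paper uses Alexander--Hirschowitz. Let $W_k$ be the forms of degree $d-2$ singular at $p_j$ for all $j\ne k$. Your own elements $g\ell^2$, with $g\in W_k$ and $\ell(p_k)=0\ne\ell(x)$, show $\mathrm{Bs}(V)\setminus\Si\subset\bigcap_k\mathrm{Bs}(W_k)=\iota_{d-2}^{-1}\bigl(\bigcap_kV_{[1,s]\setminus\{k\}}\bigr)$. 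This set is empty by that independence.

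There are also smaller errors in this case:
\begin{itemize}
\item The quadratic parts at $p_k$ live in a space of dimension $n(n-1)/2$, not $n(n+1)/2$.
\item The existence of $g\in W_k$ with $g(p_k)\ne0$ must be derived from Alexander--Hirschowitz. Otherwise the remaining $n-1$ derivative conditions at $p_k$ would leave dimension $>N_{n,d-2}-sn$.
\item Alexander--Hirschowitz fails for quadrics ($d-2=2$, $2\le s\le n-1$). So $d=4$ must be referred to the first case, whose hypothesis $n\ge s$ then holds automatically.
\end{itemize}
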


The proof of this assertion is not quite difficult applying a theorem of Alexander and Hirschowitz \cite[Theorem 2]{AH} (see also \cite{BO}) for the second case, if we can formulate a reasonable notion of points {\it in general position.} Here it is not necessary to exclude the case $(n,d{-}2,s)\eq(4,4,9)$ as in \cite{AH}, since $N_{n,d-2}\mi sn$ becomes $-1$ and not 0 as in the other exceptional cases. Note that $n$ in their paper is $n{-}1$ in our paper. According to their theorem, for given $s$ points $p_1,\dots,p_s$ of $\PP^{n-1}$ in ``general position", the vector space $\Sc_{n,d}(p_1,\dots,p_s)$ of homogeneous polynomials in $n$ variables of degree $d$ (together with 0) such that the singular points of the associated hypersurfaces {\it contain\one} these $s$ points has, except the few cases explained above, the expected dimension, which is equal to $\max(N_{n,d}\mi sn,0)$ with $N_{n,d}\eq\binom{d+n-1}{n-1}$, the dimension of homogeneous polynomials in $n$ variables of degree $d$. (Here the definition of ``general position" does not seem to be explicitly stated in these papers.) It is conjectured that Proposition~\ref{P1} and Theorem~\ref{T2} under the first hypothesis can be extended to the case where $X$ has only $A_2$-singularities instead of ordinary double points. One can verify this for each explicit example using a computer if it is not too complicated, see Section~\ref{S5} below. Note that \cite{AH} is not needed for the proof of Theorem~\ref{T2} under the first hypothesis.
\sk
Comparing the first and second hypotheses of Theorem~\ref{T2}, the first seems strictly better than the second only in the case $d\eq4$ with any $n\gess3$ or $d\eq6$ with $n\eq3,4,5$ or $d\eq8$ with $n\eq3\one$; for instance $s'_{3,6}\eq6$, $s''_{3,6}\eq5$, where $s'_{n,d}$, $s''_{n,d}$ denote the maximal integer $s$ such that the first and second inequalities of Theorem~\ref{T2} are satisfied respectively.
\sk
The theorem in \cite{AH} does not seem to imply that there is really a homogeneous polynomial of degree $d$ such that the associated hypersurface has {\it ordinary double points\one} at given $s$ points in ``general position" and is {\it smooth\one} outside them, assuming {\it only\one} $N_{n,d}\sgt sn$, where the opposite inequality $N_{n,d}\less sn$ holds in the exceptional cases with $d\gess3$. One may ask the following.

\begin{iques} \label{Q1}
Is there a good sufficient condition on $s$ (for instance, of the form $N_{n,d}\mi sn\gess\ga$ with $\ga\ins\Z_{\ges4}$) for the existence of a polynomial as above?
\end{iques}

\begin{irem} \label{R1}
Let $s_{n,d}$ be the {\it maximal\one} integer such that $r_{n,d}\defs N_{n,d}\mi s_{n,d}\one n\sgt0$. If $n$ is {\it prime,} we have $r_{n,d}\eq1$ for $d$ divisible by $n$. A non-zero polynomial $f\ins\Sc_{n,d}(p_1,\dots,p_{s_{n,d}})$ is then {\it unique\one} up to non-zero constant multiple, assuming the $s_{n,d}$ points $p_1,\dots,p_{s_{n,d}}$ are in {\it $d$-{\rm AH}-general position,} that is, $\dim\Sc_{n,d}(p_1,\dots,p_{s_{n,d}})\eq r_{n,d}$, see Definition~\ref{D4.1} below. We are interested in whether the associated projective hypersurface $X\sst\PP^{n-1}$ has ordinary double points at the given $s_{n,d}$ points (by calculating the Hessian $\det(\dd_{x_i}\dd_{x_j}f|_{x_n=1})$ at each point) and there are no other singular points (by computing the intersection of the hypersurfaces $\{\dd_{x_i}f\eq0\}$ for $i\ins[1,n]$). We see moreover that for any integer $n$ between 3 and 7, there are integers $d$ between 3 and 18 such that $r_{n,d}\eq1$. Here it is actually enough to calculate the Hilbert polynomial of the Jacobian ring, which gives the sum of the Tjurina numbers of all singular points of $X$, in order to verify the above assertion, see \cite{kosz} and Remark~\ref{R4.9} below.
\end{irem}

\begin{irem} \label{R2}
In the case $(n,d)\eq(3,6)$, the situation is quite exceptional, since we have $N_{3,6}\eq 28$, $s_{3,6}\eq9$, and $r_{3,6}\eq N_{3,3}\mi s_{3,6}\eq1$ (because $N_{3,3}\eq 10$). By these equalities we see that the (essentially unique) nonzero homogeneous polynomial in $\Sc_{3,6}(p_1,\dots,p_9)$ with $p_1,\dots,p_9$ in 6-AH general position is a {\it square\one} of a homogeneous polynomial of degree 3 vanishing at the given 9 points, so it is non-reduced and has non-isolated singularities.
\end{irem}

\begin{irem} \label{R3}
If $(n,d)\eq(4,4)$, we have $N_{4,4}\eq35$, $s_{4,4}\eq8$, and hence $r_{4,4}\eq3$ instead of 1. Any nonzero $f\ins\Sc_{4,4}(p_1,\dots,p_8)$ has {\it non-isolated\one} singularities after setting $x_4\eq1$ with coordinate $x_4$ sufficiently general, assuming the points $p_1,\dots,p_8$ are in 4-AH-general position. Indeed, there are two polynomials $g_1$, $g_2$ of degree 2 which are linearly independent and vanish at the eight points, since $N_{4,2}\mi8\eq2$. Any linear combination of $g_1^2$, $g_1g_2$, $g_2^2$ can be written as $(ag_1\mi bg_2)(a'g_1\mi b'g_2)$ for some $a,b,a',b'\ins\C$, and there is no nontrivial zero divisor in $\C[x]$. So the three polynomials are linearly independent and the vector space $\Sc_{4,4}(p_1,\dots,p_8)$ is spanned by them (because $r_{4,4}\eq3$). Since any linear combination of them is reducible by the above argument, it has non-isolated singularities after setting $x_4\eq1$, since $\dim X\eq2$. (There is a similar situation for $(n,d,s)\eq(3,4,4)$, but $\dim X\eq1$.) This implies that the constant $\ga$ in Question~\ref{Q1} must be at least 4.
\sk
In the case $(n,d)\eq(6,3)$, we have $N_{6,3}\eq56$, $s_{6,3}\eq9$, $r_{6,3}\eq2$. Any $f\ins\Sc_{6,3}(p_1,\dots,p_9)$ seems to have {\it non-isolated\one} singularities after setting $x_6\eq1$, when $p_1,\dots,p_9$ are in 3-AH-general position, see Remark~\ref{R4.9} below. (The case $(n,d)\eq(5,5)$ with $s_{5,5}\eq25$ seems too complicated.)
\end{irem}

\begin{irem} \label{R4}
If $s\eq s_{n,d}\mi1$ with $(n,d)\eq(3,6),(4,4),(6,3)$, any general $f\ins\Sc_{n,d}(p_1,\dots,p_s)$ seems to have ordinary double points at the given $s$ points and is nonsingular outside them after setting $x_n\eq1$ with coordinate $x_n$ sufficiently general by calculating the Hilbert polynomials of the Jacobian rings as in \cite[Remark 3.7]{nwh}. In the case $(n,d)\eq(3,7)$, this seems to hold even when $s\eq s_{3,7}\eq11$ with $r_{3,7}\eq 36\mi 11\one{\cdot}\one 3\eq3$. One might expect that this would be true at least in the case $r_{n,d}\gess4$ (which is closely related to Question~\ref{Q1} with $\ga\eq4$), although there does not seem to be any philosophical reason.
\end{irem}

\tableofcontents
\numberwithin{equation}{section}

\section{Pole order filtrations} \label{S1}
\nin
For a reduced homogeneous polynomial $f$ in $n$ variables of degree $d$, set $X\defs\{f\eq0\}\sst\PP^{n-1}$. Let $b_f(s)$ be the Bernstein-Sato polynomial of $f$. We denote by $\R_f\subset\Q_{>0}$ the set of roots of $b_f(s)$ up to sign. For $p\in\Si\defs{\rm Sing}\,X$, the local Bernstein-Sato polynomial $b_{h_p,p}(s)$ is {\it independent} of a choice of a local defining holomorphic function $h_p$ of $(X,p)$ (see for instance \cite[Remark 4.2(i)]{wh}), and is denoted by $b_{X,p}(s)$. Let $\R_{X,p}\sst\Q_{>0}$ be the set of roots of $b_{X,p}(s)$ up to a sign. Set
\begin{equation*}
\R_X=\mcup_{p\in\Si}\,\R_{X,p}\,\subset\,\R_f,
\end{equation*}
where the last inclusion holds using the above independence. Put
\begin{equation} \label{1.1}
\R_f^0:=\R_f\setminus\R_X\subset\tfrac{1}{d}\one\Z\q\q\h{so that}\q\q\R_f=\R_f^0\sqcup\R_X,
\end{equation}
see for instance \cite[Remark 4.2(ii)]{wh} for the above inclusion. We call $\R_f^0$ the {\it roots of $b_f(s)$ up to a sign supported at the origin.}
\sk
Set $\Ff:=f^{-1}(1)\subset\C^n$ (the Milnor fiber). We define the {\it monodromy eigenspaces\one} of the Milnor cohomology
\begin{equation*}
H^j(\Ff,\C)_{\la}:={\rm Ker}(T_s\mi\la)\subset H^j(\Ff,\C)\q\h{for}\,\,\,\la\in\C^*,
\end{equation*}
where $T_s$ is the semisimple part of the Jordan decomposition of the monodromy. Note that $H^j(\Ff,\C)_{\la}=0$ if $\la^d\nes1$ (since $f$ is homogeneous with degree $d$). These spaces have the {\it pole order filtration} $P$, and we have the following.

\begin{thm}[{\cite[Theorem 2]{bcm}}] \label{T1.1}
For $\al\notin\R_X$, we have
\begin{equation} \label{1.2}
\al\in\R_f^0\q\h{if}\q\Gr_P^p\,H^{n-1}(\Ff,\C)_{\ee(-\al)}\ne 0\q\bl(p=[n-\al]\br),
\end{equation}
where $\ee(-\al):=e^{-2\pi i\al}$, and the converse holds in the case
\begin{equation} \label{1.3}
\al\notin\R_X+\Z_{<0}.
\end{equation}
\end{thm}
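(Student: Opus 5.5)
The plan is to compare two filtrations on the same object: the pole order filtration $P$ on $H^{n-1}(\Ff,\C)_{\ee(-\al)}$, and the filtration on the Gauss--Manin system of $f$ induced by the $D[s]$-module generated by $f^s$, whose $\dd_tt$-eigenvalues produce the roots of $b_f(-s)$. Homogeneity should let us concentrate everything at the origin.

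\emph{Step 1 (graded de Rham description).} Write $H^{n-1}(\Ff,\C)_{\ee(-\al)}$ as the cohomology of the algebraic de Rham complex $(\Om^{\ssb}[f^{-1}],\ddd)$, restricted to the Euler-weight component corresponding to $\al$. A class is represented by $g\,\om_0/f^{k}$, where $\om_0$ is the standard volume form and $g$ is homogeneous with $(\deg g+n)/d=k-\al'$ for some $\al'\equiv\al$ mod $\Z$. In these terms $P^p$ is spanned by classes admitting representatives of pole order at most $n-p$. So $\Gr_P^p\neq0$ with $p=[n-\al]$ means there is a form of exact pole order $k$ that cannot be rewritten modulo exact forms with smaller pole order.

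\emph{Step 2 (roots from nonvanishing of $\Gr_P^p$).} Use the standard identity $s\,f^s\om=\dots$, which via the Euler field $E$ expresses $\ddd(f^{s+1}\eta)$ in terms of $f^s\,\df\wedge\eta$ and $(s+\text{weight})f^s\eta$. Suppose $\al\notin\R_f^0$. Then a functional equation $b_f(s)f^s=Pf^{s+1}$ has no factor $(s+\al)$ from the origin, and factors coming from $\R_X$ can be inverted since $\al\notin\R_X$. Specialising $s$ suitably and pairing with $g\,\om_0$, one lowers the pole order of the class modulo exact forms, i.e.\ $\Gr_P^p=0$. This is the contrapositive of the forward implication. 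Concretely, I would pass to the microlocal V-filtration on $\iota_*\OO[f^{-1}]f^s$ and use the fact that, on the degree-$\al$ weight part, $P$ is the filtration induced by $V^{\al}$ through the isomorphism $\Gr_V^{\al}\cong H^{n-1}(\Ff)_{\ee(-\al)}$.

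\emph{Step 3 (converse, main obstacle).} Assume now $\al\in\R_f^0$ and $\al\notin\R_X+\Z_{<0}$. Take the generator $f^s$ of $\mathcal N\defs D[s]f^s/D[s]f^{s+1}$. A root $-\al$ means $\Gr_V^{\al}$ of $\mathcal N$ is nonzero. Since $\al\notin\R_X$, the support of this graded piece is the origin, and by Euler homogeneity it is computed by the weight-$\al$ part of the Brieskorn-type module. The difficulty is to ensure that this nonzero piece actually lands in $\Gr_P^p$ for the right $p=[n-\al]$, and is not absorbed by a shift coming from a root $\be+j$ with $\be\in\R_X$, $j<0$. I would handle this by filtering $\mathcal N$ by support. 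The part supported on $\Si$ has $b$-function roots only in $-\R_X$ up to shifts by $\Z_{<0}$ arising when it is pushed through $D[s]$-operators, so hypothesis \eqref{1.3} separates these eigenvalues from $-\al$. The $-\al$-eigenspace therefore comes from the origin-supported quotient, which is identified with $\Gr_P^p$ by Step 1. The delicate point is this eigenvalue separation, namely checking that only negative integer shifts of $\R_X$ can occur. That is where I expect most of the work.
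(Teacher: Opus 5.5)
The paper does not prove this statement; it quotes it from \cite{bcm}, so your proposal has to stand on its own. Your Step 2 (the forward implication) is essentially right once made precise. Put $\mathcal M\defs D_X[s]f^s$ and $G\defs\mathcal M/\sum_i\dd_i\mathcal M$, the top de Rham cohomology, with $t=f$ acting by $s\mapsto s+1$.
\begin{itemize}
\item The Euler relation $\sum_i\dd_ix_i\,u=(n+m+ds)u$, for $u$ of degree $m$, makes $s$ act on $G_m$ by the scalar $-(n+m)/d$.
\item Every class in $G$ is represented by some $gf^s$ with $g\in\C[x]$.
\item With $m=d\al-n$, the images of $G_m$ and $tG_{m-d}$ in $G[t^{-1}]_m\cong H^{n-1}(\Ff,\C)_{\ee(-\al)}$ are $P^p$ and $P^{p+1}$, with the normalization of $P$ used here.
\end{itemize}
If $b_f(-\al)\ne0$, reducing $g\,b_f(s)f^s=gP(s)f^{s+1}$ modulo $\sum_i\dd_i\mathcal M$ gives $G_m=tG_{m-d}$, hence $\Gr_P^p=0$. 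Your aside that $P$ is ``induced by $V^\al$'' is not right: $P$ comes from the $\dd_t$-order filtration on $\mathcal B_f=\C[x][\dd_t]\delta(t-f)$, not from $V$.

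The gap is in the converse (Step 3). You correctly reduce it to $H^n\mathrm{DR}(\mathcal N_\al)=G_m/tG_{m-d}\ne0$. But you then assert that this space ``is identified with $\Gr_P^p$ by Step 1''. That identification is the whole content of the converse, and Step 1 does not give it. The natural map $G_m/tG_{m-d}\to\Gr_P^p$ is only surjective. Its kernel is $\bigl((G_{\rm tors}+tG)/tG\bigr)_m$, where $G_{\rm tors}$ is the $t$-torsion of $G$, i.e.\ the kernel of $G\to G[t^{-1}]$ (classes that die in the Milnor cohomology).

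Your proposed mechanism, filtering $\mathcal N$ by support and separating eigenvalues, does not reach this kernel. Since $s$ commutes with $D_X$, $\mathcal N=\bigoplus_\be\mathcal N_\be$ as $D_X$-modules, with $\be$ running exactly over the roots. No $\Z_{<0}$-shifts of $\R_X$ occur inside $\mathcal N$, and you already know $\mathcal N_\al$ is supported at $0$.

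What is missing is the long exact sequence of $0\to\mathcal M\xrightarrow{\,t\,}\mathcal M\to\mathcal N\to0$. Since $st=t(s-1)$, the kernel of $t\colon G_{m+(k-1)d}\to G_{m+kd}$ is the image of $H^{n-1}\mathrm{DR}(\mathcal N_{\al+k})$ for $k\ges1$. If $\al+k\notin\R_X$, then $\mathcal N_{\al+k}$ is a sum of copies of $\C[\dd]\delta_0$, so this $H^{n-1}\mathrm{DR}$ vanishes. Thus \eqref{1.3}, i.e.\ $\al+k\notin\R_X$ for all $k\ges1$, is exactly what makes $t$ injective on $G_{m+jd}$ for all $j\ges0$. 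That kills $(G_{\rm tors})_m$ and upgrades the surjection to an isomorphism $G_m/tG_{m-d}\cong\Gr_P^p$. So the hypothesis enters through $H^{n-1}\mathrm{DR}$ of the components $\mathcal N_{\al+k}$ that are not supported at the origin, linked to degree $m$ by multiplication by $f$. It does not enter through an eigenvalue separation inside $\mathcal N$.
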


\section{Pole order spectral sequences} \label{S2}
\nin
Let $\Om^{\ssb}$ be the graded complex of algebraic differential forms on $\C^n$, whose components are finite free graded modules over $R:=\C[x]$ with $x\eq(x_1,\dots,x_n)$ the coordinate system of $\C^n$. Here the $x_i$ have degree 1 as well as the $\ddd x_i$. For $k\ins\Z$, we have the microlocal {\it pole order spectral sequence}
\begin{equation} \label{2.1}
E_1^{p,q}(f)_k=H^{p+q}_{\df\wedge}(\Om^{\ssb})_{qd+k}\Longrightarrow \Ht^{p+q-1}(\Ff,\C)_{\ee(-k/d)},
\end{equation}
where $H^{\ssb}_{\df\wedge}(\Om^{\ssb})$ denotes the cohomology of the Koszul complex $(\Om^{\ssb},\dfw)$. Its abutment filtration coincides with the pole order filtration $P$ in Theorem~\ref{T1.1} up to the shift of filtration by $\bl[n\mi\kod\br]$, see \cite{kosz}. (Here the reader may assume $k\ins[1,d]$ if he prefers.)
\sk
Assume $\dim\Si\eq0$. Then the $E_1$ and $E_2$-terms of the spectral sequence are given by 
\begin{equation} \label{2.2}
\begin{aligned}
M:=H^n_{\df\wedge}(\Om^{\ssb}),&\q N:=H^{n-1}_{\df\wedge}(\Om^{\ssb})(-d),\\
M^{(2)}:=H^n_{\ddd}(H^{\ssb}_{\df\wedge}(\Om^{\ssb})),&\q N^{(2)}:=H^{n-1}_{\ddd}(H^{\ssb}_{\df\wedge}(\Om^{\ssb}))(-d),\\
\end{aligned}
\end{equation}
since $H^j_{\df\wedge}(\Om^{\ssb})\eq0$ unless $j\ins\{n{-}1,n\}$, see \cite{kosz}. Here $H^{\ssb}_{\ddd}$ denotes the cohomology of a complex whose differential is induced by $\ddd$, and $M,N$ are also denoted by $M^{(1)},N^{(1)}$ respectively. Recall that $(p)$ denotes the shift of grading by $p\in\Z$, that is, $G(p)_k=G_{k+p}$ ($k\ins\Z$) for any graded module $G$.
\sk
We have the following

\begin{thm}[{\cite[Theorem 2]{wh}}] \label{T2.1}
Assume the singularities of $X$ are isolated and weighted homogeneous. Then the pole order spectral sequence degenerates at $E_2$.
\end{thm}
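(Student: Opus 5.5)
The plan is to compare dimensions: show that the total dimension of the $E_2$-term in each eigenspace already equals the dimension of the abutment $\Ht^{\ssb}(\Ff,\C)_{\ee(-k/d)}$, so that all differentials $d_r$ with $r\ges2$ must vanish. By \eqref{2.2} only the two modules $M^{(2)},N^{(2)}$ appear. Hence the only possible higher differentials go from (a graded piece of) $N^{(r)}$ to $M^{(r)}$, shifted in degree by multiples of $d$. It therefore suffices to prove the inequality
\begin{equation*}
\msum_{q}\bl(\dim M^{(2)}_{qd+k}\br)-\msum_q\bl(\dim N^{(2)}_{qd+k}\br)\;\le\;\dim H^{n-1}(\Ff,\C)_{\ee(-k/d)}-\dim H^{n-2}(\Ff,\C)_{\ee(-k/d)},
\end{equation*}
together with the analogous bound for $N^{(2)}$ alone. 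The left-hand side is the Euler characteristic of $E_2$, which coincides with that of $E_1$ and of the abutment, so the real content is a bound on $\dim N^{(2)}$ in each residue class.

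\textbf{Step 1: structure of $N$.} First I would analyze $N$. Since $\dim\Si=0$, $N=H^{n-1}_{\df\wedge}(\Om^{\ssb})(-d)$ is a finite-length graded module. It is identified, via the Koszul complex and local duality, with the sum of local contributions at $p\in\Si$: essentially the Tjurina algebras of the germs $(X,p)$, placed in degrees governed by $d$. Because each germ is weighted homogeneous, its Tjurina algebra coincides with its Milnor algebra. This makes the graded pieces computable from the local spectra, i.e.\ the exponents $\al_{p,i}$ of $h_p$.

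\textbf{Step 2: the $\ddd$-differential.} Next, for the $\ddd$-differential $N\to M$ defining $N^{(2)}$, I would use the local analogue. For a weighted homogeneous isolated singularity, the Brieskorn lattice is graded, and the local pole order (equivalently Hodge) filtration is split by the weight grading. Consequently the local microlocal spectral sequence degenerates at $E_1$. Gluing this with the global Koszul description should show that the kernel of $\ddd\col N\to M$ is exactly the part accounted for by the local vanishing cohomology. Its dimension in each residue class $k$ mod $d$ then matches the contribution of $\Si$ to $H^{n-2}(\Ff,\C)_{\ee(-k/d)}$, computed via the long exact sequence relating $\Ff$, the complement $U=\PP^{n-1}\stm X$, and the local Milnor fibers.

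\textbf{Step 3: vanishing of $d_r$ for $r\ges2$.} Then I would conclude that any further differential $d_r\col N^{(r)}\to M^{(r)}$ would strictly decrease the total dimension below the topological one. That contradicts the dimension equality obtained from Steps 1 and 2 combined with the Euler-characteristic identity, so all $d_r$ with $r\ges2$ vanish.

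\textbf{Main obstacle.} The hardest point is Step 2: identifying the kernel of $\ddd$ on $N$ degree by degree with the local contributions. This requires a precise comparison between the global graded pieces of $N$ and the weight decomposition of the local Milnor algebras. First I would try to use the weighted Euler vector field at each $p$ to split the local Koszul complex, reducing the claim to the local $E_1$-degeneration. Second, I would use the strictness of the pole order filtration, via the Hodge-theoretic identification of $P$ with $F$ up to shift in \cite{kosz}, to rule out cancellation between different degrees $qd+k$.
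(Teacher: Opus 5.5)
Your reduction is correct but contains nothing of substance by itself. Only the rows $N$ and $M$ occur, and each $d_r$ ($r\ge2$) maps the $N$-row to the $M$-row. Nothing ever maps into the $N$-row, since $H^{n-2}_{\df\wedge}=0$. So the $N$-row of $E_\infty$ is exactly the space of permanent cycles in $N^{(2)}=\ker(\ddd^{(1)}\colon N\to M)$. Your bound $\sum_q\dim N^{(2)}_{qd+k}\le\dim H^{n-2}(\Ff,\C)_{\ee(-k/d)}$ is therefore equivalent to $E_2$-degeneration, and you have no independent formula for the right-hand side. (Incidentally, the Euler characteristic of $E_1$ is undefined: $M$ and $N$ are infinite-dimensional in every residue class.) Everything rests on Step~2, which is only asserted (``should show''), and Step~3 is circular. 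The missing idea is a mechanism showing that every class of $\ker\ddd^{(1)}$ lifts to a cocycle of the filtered total complex underlying \eqref{2.1}, i.e.\ is a permanent cycle. That is where the weighted homogeneity hypothesis has to be used, and it is what \cite{wh} supplies; the present paper only cites that result.

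Moreover, the premises of Steps~1--2 are false, so the plan cannot be completed as written.
\begin{itemize}
\item $N$ is not of finite length. From $\mu_k=\ga_k+\nu_k$ (Remark~\ref{R3.5}), $\ga_k=0$ for $k>n(d{-}1)$, and $\mu_k=\tau_X$ for $k\gg0$, one gets $\nu_k=\tau_X$ for $k\gg0$. Its Hilbert function is also not computable from the local spectra: by \eqref{2} and Example~\ref{E4.8}, $p_f$ changes when $n$ nodes move into a hyperplane, so the Hilbert function of the Jacobian ring changes, and hence so does $\nu_k=\mu_k-\ga_k$.
\item More seriously, neither $\ker\ddd^{(1)}$ nor $H^{n-2}(\Ff,\C)_\la$ is a sum of local contributions. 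Take Zariski's six-cuspidal sextics ($n=3$, $d=6$, all singularities weighted homogeneous of type $A_2$). There $H^1(\Ff,\C)_\la$ for $\la=e^{\pm2\pi i/6}$ is one-dimensional if the cusps lie on a conic and zero otherwise, although the local data are identical. So any long exact sequence relating $\Ff$, $U$ and the local Milnor fibres contains a global connecting map whose rank you do not control. Identifying that rank with $\dim\ker\ddd^{(1)}$ in each residue class is the theorem itself.
\item The tools you propose do not reach this point. The local spectral sequence of $h_p$ has a single nonzero row, because the Koszul complex of the partial derivatives of $h_p$ is exact except in top degree. So its $E_1$-degeneration holds for \emph{every} isolated singularity and cannot be where weighted homogeneity enters.
\item The abutment filtration is the pole order filtration $P$, which contains the Hodge filtration $F$ but differs from it in general. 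So no Hodge-theoretic strictness is available to rule out cancellation.
\end{itemize}
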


\section{Simplified proof of Theorem~\ref{T1}} \label{S3}
\nin
Set
\begin{equation*}
\begin{aligned}
&\q\q\q\q\q\q\q M'\defs H^0_{\mm}M,\q M''\defs M/M',\\
&\mu_k\defs\dim M,\q\nu_k\defs\dim N,\q\mu'_k\defs\dim M',\q\mu''_k\defs\dim M'',\\
&\de_k\defs\mu_k\mi\nu_{k+d},\q\de'_k\defs\mu'_k,\q\de''_k\defs\mu''_k\mi\nu_{k+d}\eq\de_k\mi\de'_k,
\end{aligned}
\end{equation*}
where $\mm\sst\C[x]$ is the maximal ideal at 0. We have the following.

\begin{thm}[{\cite[Corollaries 1 and 2]{kosz}}] \label{T3.1}
There are symmetries
\begin{equation} \label{3.1}
\de'_k=\de'_{nd-k},\q\de''_k=\de''_{(n-1)d-k}\,\,\,\,(\forall\,k\ins\Z),
\end{equation}
\end{thm}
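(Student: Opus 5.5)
The plan is to translate both symmetries into graded local duality over $R=\C[x]$, applied to the Koszul complex $K^{\ssb}:=(\Om^{\ssb},\dfw)$.

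\emph{Setup.} This complex is the Koszul complex of the partial derivatives $\dd_1f,\dots,\dd_nf$, which have degree $d\mi1$, with the grading shifted so that $\ddd x_i$ has degree 1. Since $\dim\Si\eq0$, the Jacobian ideal $(\dd f)$ has height $n\mi1$, so only $H^{n-1}$ and $H^n$ survive, as recalled after \eqref{2.2}. Moreover $M\eq(R/(\dd f))(-n)$.

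\emph{Self-duality of the Koszul complex.} The Koszul complex is self-dual:
\[
{\rm Hom}_R(K^{\ssb},\om_R)\cong K^{\ssb}[n](c)
\]
for an explicit grading shift $c$. This shift is computed from $\om_R\eq R(-n)$, the degree $n$ of $\ddd x_1{\wedge}\cdots{\wedge}\ddd x_n$, and the degrees $d\mi1$ of the differentials. I will then compare the two hypercohomology spectral sequences: the one computing $R\Gamma_{\mm}(K^{\ssb})$, and its graded local dual $R{\rm Hom}_R(K^{\ssb},\om_R)[n]$. Because $K^{\ssb}$ has only two cohomology modules, namely $M$ (finite length part $M'$, one-dimensional part $M''$) and $N$ (one-dimensional), these spectral sequences collapse to a few short exact sequences.

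\emph{First symmetry.} The modules $M''$ and $N$ are one-dimensional and have no $\mm$-torsion; for $N$ this is because it is a submodule of a free module up to shift. So the only zero-dimensional local cohomology in the total complex comes from $M'\eq H^0_{\mm}M$. Matching the extreme terms of the two spectral sequences, I expect an isomorphism
\[
M'\cong{\rm Hom}_{\C}(M',\C)(-nd).
\]
This is the classical self-duality of $H^0_{\mm}$ of the Jacobian ring, whose socle is in degree $n(d\mi2)$, shifted by $2n$ via the two occurrences of $\ddd x$. Taking dimensions gives $\de'_k\eq\de'_{nd-k}$.

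\emph{Second symmetry.} The remaining terms give a duality between $H^1_{\mm}M''$ and $N$, and between $H^1_{\mm}N$ and $M''$. Concretely, $N(d)$ should be identified with ${\rm Ext}^{n-1}_R(M'',\om_R)$ up to the twist, so that $H^1_{\mm}(M'')_k$ is dual to $N_{(n-1)d-k+d}$, and symmetrically with the roles exchanged. For a one-dimensional graded module $G$ with $H^0_{\mm}G\eq0$ and Hilbert polynomial the constant $\tau$, one has
\[
\dim G_k\eq\tau\mi\dim(H^1_{\mm}G)_k .
\]
Here both $M''$ and $N(d)$ have Hilbert polynomial $\tau$, the total Tjurina number of $X$. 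Substituting these into $\de''_k\eq\mu''_k\mi\nu_{k+d}$, the constants $\tau$ cancel and the duality converts the expression at $k$ into the one at $(n\mi1)d\mi k$.

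\emph{Main obstacle.} The hard step is bookkeeping: pinning down the exact shifts $(nd$ versus $(n\mi1)d)$ and checking that the connecting maps in the duality spectral sequence really identify $H^1_{\mm}M''$ with the dual of $N$, rather than only some subquotient of it. I would first do this carefully in the model case of a single ordinary double point, where everything is explicit, to fix the signs and degrees before running the general argument.
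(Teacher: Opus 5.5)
The paper gives no argument for this statement: it is quoted from Corollaries~1 and~2 of the cited Koszul-complex paper, so your proof has to stand on its own. Your strategy, self-duality of $(\Omega^\bullet,\mathrm{d}f\wedge)$ plus graded local duality, is sound, and the centres $nd$ and $(n-1)d$ you predict are correct. However, one supporting claim is false: $N$ is \emph{not} a submodule of a free module. Koszul cohomology is annihilated by every $\partial_{x_i}f$, because $\iota_{\partial_{x_i}}\circ(\mathrm{d}f\wedge)+(\mathrm{d}f\wedge)\circ\iota_{\partial_{x_i}}$ is multiplication by $\partial_{x_i}f$. So $N$ is a torsion module. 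The fact you actually need, $H^0_{\mathfrak m}N=0$, is still true. Indeed $N(d)=Z^{n-1}/B^{n-1}$ lies in $\Omega^{n-1}/B^{n-1}$. That module is resolved by $0\to\Omega^0\to\cdots\to\Omega^{n-1}$, which is exact since $H^j=0$ for $j<n-1$, and therefore has depth $\ge1$. The vanishing also falls out of the duality itself, as shown below.

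On the bookkeeping you flag as the main obstacle: in this grading $\mathrm{d}f\wedge$ has degree $d$, not $d-1$, so put $\widetilde K^j:=\Omega^j(jd)$. The wedge pairing gives $\mathrm{Hom}_R(\widetilde K^\bullet,\Omega^n)\cong\widetilde K^\bullet[n](-nd)$ with $\Omega^n=\omega_R\cong R(-n)$. Also $H^n(\widetilde K)=M(nd)$ and $H^{n-1}(\widetilde K)=N(nd)$, so local duality becomes $\mathbb H^i_{\mathfrak m}(\widetilde K)_k\cong\bigl(H^{2n-i}(\widetilde K)_{-k-nd}\bigr)^*$. The spectral sequence $H^p_{\mathfrak m}H^q(\widetilde K)\Rightarrow\mathbb H^{p+q}_{\mathfrak m}(\widetilde K)$ occupies only the columns $p=0,1$, so it degenerates and no subquotient issue arises.

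Degree $i=n-1$ gives $H^0_{\mathfrak m}N=0$. Degree $i=n+1$ gives $H^1_{\mathfrak m}(M'')_k\cong(N_{nd-k})^*$, hence $\mu''_k=\tau-\nu_{nd-k}$ and $\delta''_k=\tau-\nu_{nd-k}-\nu_{k+d}$, which is visibly invariant under $k\mapsto(n-1)d-k$.

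For the first symmetry, ``matching extreme terms'' is not enough as written. Degree $i=n$ only yields an extension $0\to H^1_{\mathfrak m}(N)(nd)\to\mathbb H^n_{\mathfrak m}(\widetilde K)\to M'(nd)\to0$ whose middle term is the graded dual of $M$. Take dimensions, using $\dim H^1_{\mathfrak m}(N)_j=\tau_N-\nu_j$ (with $\tau_N$ the constant Hilbert polynomial of $N$) and the formula for $\mu''_k$. This gives $\mu'_k-\mu'_{nd-k}=\tau_N-\tau$ for all $k$, and $k\gg0$ forces this constant to be $0$. If you want the isomorphism $M'\cong\mathrm{Hom}_{\mathbb C}(M',\mathbb C)(-nd)$ itself, dualize to $0\to\mathrm{Hom}_{\mathbb C}(M',\mathbb C)(-nd)\to M\to\mathrm{Ext}^{n-1}_R(N,\omega_R)(-nd)\to0$. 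Then use that the right-hand term, the canonical module of the one-dimensional Cohen--Macaulay module $N$, has no $\mathfrak m$-torsion.
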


Combining Theorems~\ref{T1.1} and \ref{T2.1} and using \cite[Theorem 5.3]{kosz}, which implies the injectivity of the $E_1$-differential $\ddd_1\col N_{k+d}\tos M_k$ in the case $\kod$ is not a spectral number (or a root of the reduced local Bernstein-Sato polynomial $b_{X,p}(s)/(s{+}1)$ up to sign, using the weighted homogeneity) of any singular point of $X$, we can deduce the following.

\begin{thm} \label{T3.2}
Assume $X$ has only isolated weighted homogeneous singularities. Then for $k\ins\Z$ with $\kod\,{\notin}\,\R_X$, we have $\kod\ins\R_f^0$ if $\de_k\sgt 0$, and the converse holds in the case $\kod\notin\R_X\pl\Z_{<0}$.
\end{thm}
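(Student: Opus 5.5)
The plan is to combine Theorem~\ref{T1.1} (applied with $\al=\kod$) with the $E_2$-degeneration of Theorem~\ref{T2.1}, which applies since isolated weighted homogeneous singularities are assumed. By \eqref{2.1} and the identification of the abutment filtration with $P$ up to the shift $[n\mi\kod]$, the graded piece $\Gr_P^p\,H^{n-1}(\Ff,\C)_{\ee(-k/d)}$ with $p\eq[n\mi\kod]$ is isomorphic to a single $E_\infty\eq E_2$-term of degree $n$ in the grading $k$. Concretely, this is the degree-$k$ part $M^{(2)}_k$, the cokernel of $\ddd_1\col N_{k+d}\tos M_k$. I would also use that all $E_2$-terms with total degree $n{-}1$ (the kernel $N^{(2)}$) contribute only to $H^{n-2}$, so they play no role for $\Gr_P^p H^{n-1}$.

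The key steps, in order, are as follows.

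\emph{Bookkeeping.} Check that for fixed $k$ the only $E_1$-terms involved are $M_k\eq H^n_{\df\wedge}(\Om^{\ssb})_k$ and $N_{k+d}\eq H^{n-1}_{\df\wedge}(\Om^{\ssb})_{k}$, in the indexing of \eqref{2.2}. Check also that the differential $\ddd_1$ goes $N_{k+d}\tos M_k$. Then degeneration gives $\Gr_P^p H^{n-1}(\Ff,\C)_{\ee(-k/d)}\cong{\rm Coker}(\ddd_1\col N_{k+d}\tos M_k)$, with the correct $p$.

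\emph{Injectivity.} Invoke \cite[Theorem 5.3]{kosz}: since $\kod\notin\R_X$, and for weighted homogeneous isolated singularities the roots of $b_{X,p}(s)/(s{+}1)$ coincide with the spectral numbers, $\kod$ is not a spectral number of any singular point. Hence $\ddd_1\col N_{k+d}\tos M_k$ is injective. I expect this to be the main obstacle. One has to be careful that the non-spectral condition matches exactly $\kod\notin\R_X$, including the root $1$. The case $\kod\eq1$ is excluded anyway, since $1\ins\R_X$. For the remaining values one must use that $b_{X,p}(s)/(s{+}1)$ has simple roots equal to the spectral numbers in the weighted homogeneous case.

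\emph{Dimension count.} By injectivity, $\dim{\rm Coker}\,\ddd_1\eq\mu_k\mi\nu_{k+d}\eq\de_k$, so $\Gr_P^p\nes0$ if and only if $\de_k\sgt0$. Theorem~\ref{T1.1} then gives $\kod\ins\R_f^0$ whenever $\de_k\sgt0$. Conversely, under the extra hypothesis $\kod\notin\R_X\pl\Z_{<0}$, which is \eqref{1.3}, the converse in Theorem~\ref{T1.1} yields $\Gr_P^p\nes0$, hence $\de_k\sgt0$.
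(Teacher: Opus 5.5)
Your proposal is correct and is essentially the paper's argument. $E_2$-degeneration (Theorem~\ref{T2.1}) identifies $\Gr_P^p\,H^{n-1}(\Ff,\C)_{\ee(-k/d)}$ with ${\rm Coker}(\ddd_1\col N_{k+d}\tos M_k)$. Injectivity of $\ddd_1$ from \cite[Theorem 5.3]{kosz} holds because $\kod\notin\R_X\supset\widetilde{\R}_X$, which is the set of spectral numbers in the weighted homogeneous case, so this cokernel has dimension $\de_k$, and Theorem~\ref{T1.1} then gives both directions.
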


For the proof of Theorem~\ref{T1} we need also the following (which is due to \cite{DiSt}, \cite{Di} in the ordinary double point case).

\begin{thm}[{\cite[Theorem 9]{odp}}] \label{T3.3}
Assume $X$ has only isolated weighted homogeneous singularities. Then $\nu_{k+d}\eq0$ if $\kod\slt\alt_X\defs\min\widetilde{\R}_X$, where $\widetilde{\R}_X\defs\mcup_{p\in\Si}\,\widetilde{\R}_{X,p}$ with $\widetilde{\R}_{X,p}$ the roots of the reduced local Bernstein-Sato polynomial $b_{X,p}(s)/(s{+}1)$ up to sign, which coincide with the spectral numbers at $p\in\Si$ forgetting multiplicities.
\end{thm}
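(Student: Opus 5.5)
The plan is to pass from the graded module $N$ to local data at the points of $\Si$, and then to use the local $V$-filtration (Brieskorn lattice) of each weighted homogeneous singularity to force $\kod\ges\alt_X$.

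\emph{Step 1 (torsion-freeness).} First I would show that $N=H^{n-1}_{\df\wedge}(\Om^{\ssb})(-d)$ has no $\mm$-torsion. Since $\dim\Si=0$, the partial derivatives $\dd_if$ generate an ideal of height $n-1$. A depth argument on the Koszul complex should then give $H^0_{\mm}N=0$, and even $H^1_{\mm}N=0$ after using the Euler relation. This means $N$ embeds into the graded module of global sections of its sheafification $\widetilde N$ on $\PP^{n-1}$. That sheaf is supported on $\Si$, because the Koszul complex is exact wherever $\df$ has no zeros modulo the Euler field. Consequently, a nonzero class in $N_{k+d}=H^{n-1}_{\df\wedge}(\Om^{\ssb})_k$ has a nonzero germ at some $p\ins\Si$.

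\emph{Step 2 (local identification).} Next I would fix $p\ins\Si$ and dehomogenize by $x_n=1$, where $x_n$ is a general coordinate. I would identify the stalk $\widetilde N_p$ with a local Koszul cohomology $H^{n-2}_{\ddd h_p\wedge}$ of a local defining function $h_p$, or more precisely with the kernel of $\ddd h_p\wedge$ on $\Om^{n-1}$ modulo exact pieces. Through the $E_1$-differential $\ddd_1$ this kernel maps into the local Brieskorn lattice $H''_{h_p}$. Using \cite[Theorem 5.3]{kosz}, I would check that under this map a homogeneous class of degree $k$ lands in the $V$-filtration piece $V^{k/d}$ of $H''_{h_p}$, with nonzero image in $\Gr_V^{k/d}$. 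The mechanism is that the grading degree $k$ corresponds to the pole order $k/d$ through the quasi-homogeneous scaling.

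\emph{Step 3 (conclusion).} For a weighted homogeneous isolated singularity, $\Gr_V^{\al}$ of the saturated Brieskorn lattice modulo $\ddd h_p\wedge$ is nonzero only for $\al$ a spectral number. Moreover $V^{\al}\cap H''_{h_p}$ is the whole module for $\al\les\min\widetilde\R_{X,p}$, with no classes of strictly smaller order. Therefore a nonzero germ in degree $k$ forces $\kod\ges\min\widetilde\R_{X,p}\ges\alt_X$. Taking the contrapositive gives $\nu_{k+d}=0$ for $\kod\slt\alt_X$.

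The main obstacle is Step 2: proving that the class actually survives in $\Gr_V^{k/d}$, rather than merely lying in $V^{k/d}$. I would first try to use the weighted homogeneity to split $H''_{h_p}$ by weights, so that the degree-$k$ condition becomes an explicit weight condition on monomial forms. In the ordinary double point case this is a direct computation, since the spectral number is $(n-1)/2$ and the syzygies of the $\dd_if$ of low degree vanish. This case would serve both as a check and as the fallback, recovering \cite{DiSt}, \cite{Di}.
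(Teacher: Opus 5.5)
Your proposal has a genuine gap at Step 2, and it cannot be repaired within a purely local strategy. The paper itself only quotes \cite[Theorem 9]{odp}, so this assessment is of your argument on its own terms.

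\textbf{What is fine in Step 1.} The part you actually use holds: $H^0_{\mm}N=0$, so $N\hookrightarrow\Gamma_*(\widetilde N)$ and $\nu_k$ is even weakly increasing. The side claim $H^1_{\mm}N=0$ is false, though you never use it. Since $\widetilde N$ is supported on the finite set $\Si$, we have $\dim\Gamma(\PP^{n-1},\widetilde N(j))=\tau_X$ for every $j$, while $N_j=0$ for $j\ll0$.

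\textbf{The stalk is misidentified in Step 2.} $H^{n-2}_{\ddd h_p\wedge}$ is zero, because the $\dd h_p$ form a regular sequence. After setting $x_n=1$, the ideal $(\dd f)$ becomes $(\dd h_p,h_p)$. The stalk of $\widetilde N$ at $p$ is then $\mathrm{Ann}_{A_p}(h_p)$, with $A_p$ the local Milnor algebra; in the weighted homogeneous case this is all of $A_p$. This finite-dimensional space keeps no trace of the degree $k$. The Euler field of $f$ acts trivially on $\PP^{n-1}$ and has nothing to do with the local $\C^*$-action making $h_p$ quasi-homogeneous.

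\textbf{The degree-to-$V$-order correspondence is false.} You assert that a nonzero class of degree $k$ has nonzero image in $\Gr_V^{k/d}$. But for $k\gg0$ we have $N_{k+d}\cong\Gamma(\PP^{n-1},\widetilde N(k+d))$, so $\nu_{k+d}=\tau_X>0$ for \emph{every} large $k$. On the other hand, $\Gr_V^{k/d}$ of the local Milnor algebra vanishes unless $k/d$ is a spectral number. For the Brieskorn lattice, it vanishes unless $k/d$ is a spectral number plus a non-negative integer. Steps 2 and 3 together would therefore give $N_{k+d}=0$ for a nodal plane curve and all large $k$ with $d\nmid k$, which is wrong. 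What \cite[Theorem 5.3]{kosz} extracts from $\Gr_V^{k/d}$ is only injectivity of $\ddd_1\col N_{k+d}\to M_k$ when $\kod$ is not a spectral number, in particular for $\kod<\alt_X$. That is strictly weaker than $N_{k+d}=0$.

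\textbf{The missing ingredient is global.} Every element of the stalks is the germ of a class in $N_{k+d}$ for $k\gg0$. The theorem says this fails in degrees $k<d\,\alt_X$, and that depends on how $\Si$ sits in $\PP^{n-1}$ relative to $d$. It does not depend on the local germs, which are analytically the same at every node. Graded local duality identifies $N$, up to shift, with the graded dual of $H^1_{\mm}M$. In the nodal case the statement therefore says that the nodes impose independent conditions on forms of all degrees above an explicit bound in $n$ and $d$. This is the ``defects of linear systems'' viewpoint of \cite{Di}, a Severi-type global statement.

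\textbf{What you would need instead.} One global input would be to identify $\nu_{k+d}$ with an $H^1$ of a twisted ideal sheaf of the Jacobian scheme. You would then combine this with a Kodaira--Nadel type vanishing for a Hodge-theoretic (microlocal multiplier) ideal contained in the Jacobian ideal. An alternative is a Griffiths-type comparison of the pole order and Hodge filtrations in the range governed by $\alt_X$.

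Your fallback for nodes is circular. The vanishing of low-degree syzygies of the $\dd_if$ is exactly the theorem of \cite{DiSt}, \cite{Di}.
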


\begin{proof}[Simplified proof of Theorem~{\rm\ref{T1}}]
We first show that $\de''_k\sgt0$ for any $k\ins\Z\cap\bl[n,\tfrac{(n-1)d}{2}\br)$. This follows from Theorem~\ref{T3.3}, since the $\mu''_k$ are weakly increasing (see \cite{kosz}) and $\mu''_n\eq1$. (Note that $\mu'_n$ cannot be equal to 1.) The assertion is then reduced to Lemma~\ref{L3.4} below using Theorem~\ref{T3.2} together with the second symmetry in Theorem~\ref{T3.1}.
\end{proof}

\begin{lem} \label{L3.4}
With the notation of Introduction, assume $I_{\Si,k_0}\nes(\dd f)_{k_0}$ for some $k_0\ins[1,d{-}1]$. Then $I_{\Si,k}\nes(\dd f)_k$ for any $k\ins[k_0{+}1,d{-}1]$. 
\end{lem}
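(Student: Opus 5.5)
The plan is to exploit the fact that $(\dd f)$ is generated in degree $d{-}1$, so only the top degree $k\eq d{-}1$ needs real work. First note the inclusion $(\dd f)\sst I_{\Si}$: every $\dd_{x_i}f$ vanishes on $\Si\eq{\rm Sing}\,X$. Since the generators $\dd_{x_i}f$ have degree $d{-}1$, we have $(\dd f)_k\eq0$ for $k\less d{-}2$ and $(\dd f)_{d-1}\eq\sum_i\C\,\dd_{x_i}f$, a space of dimension at most $n$. So the hypothesis $I_{\Si,k_0}\nes(\dd f)_{k_0}$ gives a nonzero homogeneous $g\ins I_{\Si,k_0}$; if $k_0\less d{-}2$ this is just $I_{\Si,k_0}\nes0$, and if $k_0\eq d{-}1$ there is nothing to prove. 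Assume from now on $k_0\less d{-}2$.

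For $k\ins[k_0{+}1,d{-}2]$ the claim is immediate: $x_1^{k-k_0}g$ is a nonzero element of $I_{\Si,k}$, while $(\dd f)_k\eq0$.

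It remains to treat $k\eq d{-}1$. Put $e\defs d{-}1{-}k_0\gess1$ and consider $V\defs g\cdot\C[x]_e\sst I_{\Si,d-1}$. Since $\C[x]$ is a domain, $\dim V\eq N_{n,e}\eq\tbinom{e+n-1}{n-1}$. If $e\gess2$, then $N_{n,e}\sgt n\gess\dim(\dd f)_{d-1}$, so $V\not\subset(\dd f)_{d-1}$ and we are done. If $e\eq1$, then $\dim V\eq n$. Suppose $V\sst(\dd f)_{d-1}$; comparing dimensions gives $V\eq(\dd f)_{d-1}$. In particular every $\dd_{x_i}f$ is divisible by $g$, which has degree $k_0\gess1$. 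Hence all partial derivatives vanish on the hypersurface $\{g\eq0\}\sst\PP^{n-1}$. This hypersurface has dimension $n{-}2\gess1$ because $n\gess3$. It is contained in $X$ by the Euler identity $df\eq\sum_i x_i\dd_{x_i}f$. So $\Si$ would contain a positive-dimensional set, contradicting the assumption that $X$ has only isolated (ordinary double point) singularities. Thus $I_{\Si,d-1}\nes(\dd f)_{d-1}$.

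The only step with any content is this borderline case $e\eq1$, where the dimension count alone is insufficient. Here the key point is to convert the equality of vector spaces into the divisibility of all partials by $g$, and to contradict $\dim\Si\eq0$.
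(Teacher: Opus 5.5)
Your proof is correct and is essentially the paper's argument. You reduce to the top degree $k=d{-}1$ and use $\dim(\partial f)_{d-1}\le n$; then $g\,\C[x]_1=(\partial f)_{d-1}$ would put the hypersurface $\{g=0\}\subset\PP^{n-1}$ inside the singular locus, contradicting $\dim\Sigma=0$. The only cosmetic difference is that the paper reduces directly to $k_0=d{-}2$ (by multiplying $g$ by a form of degree $d{-}2{-}k_0$), whereas you dispose of the case $e\ge2$ by a separate dimension count.
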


\begin{proof}
It is enough to consider the case $k\eq d{-}1$ with $k_0\eq d{-}2$, since $(\dd f)_k\eq0$ for $k\less d{-}2$. If the assertion does not hold, there is $g\ins\C[x]_{d-2}$ such that $(\dd f)_{k+d-2}\eq g\one\C[x]_k$ for any $k\ins\Z_{>0}$ (since $\dim_{\C}(\dd f)_{d-1}\eq n$ and the ideal $(\dd f)$ is generated by $(\dd f)_{d-1}$), but this is a contradiction considering the associated projective varieties in $\PP^{n-1}$, see also the proof of \cite[Theorem 5]{wh}.
\end{proof}

\begin{rem} \label{R3.5}
Theorem~\ref{T1} is extended to the case where $X$ has only {\it weighted homogeneous isolated\one} singularities, for instance if $\tau_X\slt\ga_{k_0}$ with $k_0\defs\lceil d\one\alt_X\rceil$, see \cite[Theorem 5]{wh} for a better assertion. Here $\tau_X$ is the sum of the Tjurina or Milnor numbers of the singular points of $X$ and the $\ga_k$ are defined by $\msum_{k\in\Z}\,\ga_k\one t^k\eq(t^d\mi t)^n/(t\mi1)^n$. (Indeed, the inequality implies that $\mu'_k\sgt0$ for any $k\ins [k_0,n\one d\mi k_0]$ using the equality $\mu_k\eq\ga_k\pl\nu_k$.) See also the latest version of \cite{wh} using \cite{DiPo} for the case $n\eq3$.
\end{rem}

\section{Points in general position} \label{S4}

\begin{defi} \label{D4.1}
Let $\iota_m:\PP^{n-1}\into\PP^{N_{n,m}-1}$ be the Veronese embedding of degree $m$, where $N_{n,m}\defs\tbinom{m+n-1}{n-1}$ for $m\ins\Z_{>0}$. We say that $s$ points $p_1,\dots,p_s\ins\PP^{n-1}$ are in $m$-{\it general position\one} if their images in $\PP^{N_{n,m}-1}$ span a linear subspace with dimension equal to $\min(s\mi1,N_{n,m}\mi1)$. We say that the points are in {\it general position\one} if they are in $m$-general position for any $m\ins\Z_{>0}$.
\sk
We say that $s$ points $p_1,\dots,p_s\in\PP^{n-1}$ are in {\it $m$-{\rm AH}-general position\one} if
\begin{equation*}
\dim\Sc_{n,m}(p_1,\dots,p_s)=\max(N_{n,m}\mi sn,0).
\end{equation*}
Here $\Sc_{n,m}(p_1,\dots,p_s)$ is the vector space of homogeneous polynomials $f$ of degree $m$ (together with the polynomial 0) such that their partial derivatives $\dd_{x_i}f$ vanish at the given $s$ points (using the relation $f\eq\tfrac{1}{m}\msum_{i=1}^n\,x_i\dd_{x_i}f$).
\end{defi}

\begin{rem} \label{R4.2}
In the case $m\slt q_s\defs\min\{q\ins\N\,\one|\one\,N_{n,q}\sgt s\}$ (that is, $N_{n,m}\less s$), the points $p_1,\dots,p_s\ins\PP^{n-1}$ are in $m$-general position if and only if there are no nonzero homogeneous polynomials of degree $m$ vanishing on these points (or equivalently the images of the $s$ points are not contained in a hyperplane of $\PP^{N_{n,m}-1}$).
\end{rem}

\begin{rem} \label{R4.3}
Let $\Xi_s$ be the subset of the $s$-fold self-product of $\PP^{n-1}$ consisting of mutually distinct $s$ points of $\PP^{n-1}$ (that is, the complement of the union of various partial diagonals).
\sk
The subset $\Xi_{s,m}$ consisting of $s$ points in $m$-general position is a non-empty Zariski-open subset of $\Xi_s$ for each $m\ins\Z_{>0}$, and a similar assertion holds with $m$-general replaced by general using Proposition~\ref{P4.4} below. The proof of non-emptiness for each $m$ can be done by induction on $s$, since the image of $\iota_m$ is not contained in any hyperplane of $\PP^{N_{n,m}-1}$. (As for the case $n\eq 2$, mutually distinct points are always in general position using Vandermonde's determinant.)
\sk
The subset $\Xi^{\rm AH}_{s,m}$ consisting of $s$ points in $m$-{\rm AH}-general position is also a Zariski-open subset of $\Xi_s$ for any $m\ins\Z_{>0}$. This subset is non-empty except the cases $(n,m,s)\eq(3,4,5)$, $(4,4,9)$, $(5,3,7)$, $(5,4,14)$, according to \cite{AH} (see also \cite{BO}).
\end{rem}

\begin{prop} \label{P4.4}
Mutually distinct $s$ points $p_1,\dots,p_s$ on $\PP^{n-1}$ are in general position if and only if they are in $m$-general position for $m\eq q_s,q_s{-}1$.
\end{prop}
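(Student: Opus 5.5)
We have to show that $m$-general position for every $m\ins\Z_{>0}$ follows from $m$-general position for $m\eq q_s{-}1$ and $m\eq q_s$; the converse is trivial. For each $m$, consider the evaluation map
\begin{equation*}
{\rm ev}_m\col\C[x]_m\to\C^s,
\end{equation*}
obtained by choosing representatives of the $p_i$. The images of the $p_i$ under $\iota_m$ span a linear space of dimension equal to ${\rm rank}\,{\rm ev}_m\mi1$. Hence $m$-general position means ${\rm rank}\,{\rm ev}_m\eq\min(s,N_{n,m})$. The plan is to split the range of $m$ into $m\less q_s{-}1$ and $m\gess q_s$ and to propagate the condition downward and upward respectively.

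For $m\slt q_s$ we have $N_{n,m}\less s$, so by Remark~\ref{R4.2}, $m$-general position means that no nonzero form of degree $m$ vanishes at all the $p_i$, i.e.\ $I_{\Si,m}\eq0$ with $\Si\eq\{p_1,\dots,p_s\}$. If $I_{\Si,q_s-1}\eq0$, then $I_{\Si,m}\eq0$ for all $m\less q_s{-}1$. Indeed, if $0\nes g\ins I_{\Si,m}$, then $x_1^{q_s-1-m}g$ is a nonzero element of $I_{\Si,q_s-1}$. This settles the lower range. (If $q_s\eq1$, which happens only for $s\eq0$, there is nothing to check.)

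For $m\gess q_s$ we have $N_{n,m}\sgt s$, so we must show that ${\rm ev}_m$ is surjective, i.e.\ that the $s$ points impose independent conditions on forms of degree $m$. By hypothesis ${\rm ev}_{q_s}$ is surjective. The standard fact that, for a reduced finite set, the Hilbert function $h_\Si(m)\eq{\rm rank}\,{\rm ev}_m$ is non-decreasing and, once it equals $s$, stays equal to $s$, then gives surjectivity for all $m\gess q_s$. To keep things self-contained I would argue directly. Surjectivity of ${\rm ev}_{q_s}$ provides, for each $i$, a form $g_i$ of degree $q_s$ with $g_i(p_j)\eq0$ for $j\nes i$ and $g_i(p_i)\nes0$. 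Choose a linear form $\ell$ not vanishing at any $p_j$, which is possible since the set of points is finite. Then the forms $\ell^{m-q_s}g_i$ give the separating forms in degree $m$, so ${\rm ev}_m$ is surjective.

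The only real subtlety is bookkeeping: each degree is controlled by exactly one of the two hypotheses, since the dichotomy $N_{n,m}\less s$ versus $N_{n,m}\sgt s$ coincides with $m\less q_s{-}1$ versus $m\gess q_s$. In the first range, vanishing of $I_{\Si,m}$ propagates downward by multiplying by a monomial; in the second, surjectivity propagates upward by multiplying by a power of a linear form avoiding the points. I expect no serious obstacle beyond the choice of $\ell$. This choice uses that $\C$ is infinite, and the distinctness of the $p_i$ is used only to make the evaluation characterization meaningful.
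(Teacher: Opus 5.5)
Your proof is correct and follows essentially the paper's argument. Below $q_s$ you propagate the condition downward via Remark~\ref{R4.2}, multiplying a vanishing form by a variable. At and above $q_s$ you propagate it upward by multiplying by a linear form nonvanishing at the points; the paper does the same by choosing coordinates with all $p_j$ in $\{x_n\ne0\}$, going degree by degree where you use $\ell^{m-q_s}$ in one step. One small slip is harmless: $q_s=1$ holds exactly when $s<n$, not only for $s=0$, but the lower range of positive degrees is then empty anyway.
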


\begin{proof}
It is enough to prove that the points $p_1,\dots,p_s$ are in $m$-general position if they are in $(m{-}1)$-general position with $m{-}1\gess q_s$ or if they are in $(m{+}1)$-general position with $m{+}1\less q_s{-}1$. The second case is shown by using Remark~\ref{R4.2}. For the first case we take projective coordinates $x_1,\dots,x_n$ of $\PP^{n-1}$ such that the $s$ points are contained in the affine space $\C^{n-1}\eq\{x_n\nes0\}\sst\PP^{n-1}$. Then the restriction of the Veronese embedding $\iota_m$ to this affine space can be described by using homogeneous polynomials of degrees at most $m$. So the assertion follows.
\end{proof}

\begin{rem} \label{R4.5}
We say that $(p_1,\dots,p_s)\ins\Xi_s$ is {\it in sufficiently general position\one} if it belongs to a sufficiently small non-empty Zariski-open subset of $\Xi_s$. Here we may assume the subset is invariant under the action of the symmetric group ${\mathfrak S}_s$ by taking the intersection of its images by the action of elements of ${\mathfrak S}_s$ (since the intersection of a finite number of non-empty Zariski-open subsets of an irreducible variety is non-empty).
\sk
In the first case of Theorem~\ref{T2}, a sufficient condition is given by the last one in Definition~\ref{D4.1}. It is actually enough to assume that the $s$ points $p_1,\dots,p_s$ are in $m$-general position for $m\eq1$ and $\lfloor d/2\rfloor\mi1$, where $N_{n,\lfloor d/2\rfloor\mi1}\gess s$ by the assumption itself. Note that one has to consider various intersections of the $V_j$ with the notation in the proof of Theorem~\ref{T2}.
\sk
In the second case a sufficient condition is that the points are in 1-general position and the $s$ points are in $(d{-}2)$-AH-general position. With the notation of Remark~\ref{R4.7} below the last condition is equivalent to that the images of the $s$ points $p_1,\dots,p_s$ by $\iota_{d-2}$, $\iota_{d-2}^{(1)},\dots,\iota_{d-2}^{(n-1)}$ span a linear subspace of dimension $sn{-}1$ in $\PP^{N_{n,d-2}-1}$ in the case $N_{n,d-2}\gess sn$.
\end{rem}

\begin{rem} \label{R4.6}
Let $V'_J$ be the vector space spanned by the $v'_j$ for $j\ins J$, where $J\sst\{1,\dots,r\}$ and $v'_1,\dots, v'_r$ are points in a vector space $V'$. It does not necessarily hold that $V'_I\cap V'_J\eq V'_{I\cap J}$ for $I,J\sst\{1,\dots,r\}$ unless the $v'_1,\dots, v'_r$ are linearly independent. This is the reason for which we need \cite[Theorem 2]{AH}.
\end{rem}

\begin{proof}[Proof of Theorem~{\rm\ref{T2}}]
We first consider the first case. We may assume $m\gess1$ (hence $d\gess4$), since the case $s\eq1$ is easy and well known. We may further assume $s\sgt n$, since the case $s\less n$ with $d\gess 4$ is also easy. Indeed, if $s\eq1$ or $s\less n$ with $d\gess 4$, we can take for instance $f\eq\msum_{i=1}^s\,a_i\bl(\msum_{j\ne i}\,x_i^{d-2}x_j^2\br)\pl \msum_{i=s+1}^n\,a_i\one x_i^d$ for $a_i\ins\C^*$ sufficiently general, considering a family as in the argument below.
\sk
Let $p_1,\dots,p_s$ be $s$ points of $\PP^{n-1}$ in general position. For $j\ins[1,s]$, let $g_j$ be homogeneous polynomials of degree $m$ such that the associated projective hypersurface $\{g_j\eq0\}\sst\PP^{n-1}$ contains the $p_i$ for $i\ne j$. (Note that $N_{n,m}\sgt s{-}1$.) Let $h_j$ be homogeneous polynomials of degree $d\mi2m$ (which is equal to 2 or 3 depending on the parity of $d\one$) such that the associated projective hypersurface $\{h_j\eq0\}\sst\PP^{n-1}$ contains $p_j$ and has an ordinary double point there. Set $f\defs\sum_{j=1}^s\,g_j^2h_j$. The projective hypersurface $X\defs\{f\eq0\}\sst\PP^{n-1}$ has {\it ordinary double points\one} at the $p_j$, and is {\it smooth\one} outside them if the $g_j,h_j$ are sufficiently general (where the $g_j$ are unique up to non-zero constant multiple in the case $s\eq N_{n,m}$). 
\sk
For the proof of this assertion, we consider the Veronese embedding $\iota_m:\PP^{n-1}\into\PP^{N_{n,m}-1}$ and the linear subspace $V_j\sst\PP^{N_{n,m}-1}$ spanned by the $p_i$ for $i\ins[1,s]\stm\{j\}$ in $\PP^{N_{n,m}-1}$ for each $j\ins[1,s]$, which give the fixed part of the linear system containing $g_j$ by restricting to $\iota_m(\PP^{n-1})\sst\PP^{N_{n,m}-1}$. The images of $p_j$ for $j\ins[1,s]$ span a linear subspace of dimension $s{-}1$ in $\PP^{N_{n,m}-1}$. Studying intersections of the $V_j$, we then see that 
\begin{equation*}
\mcap_{j\in[1,s]}\{g_jh_j\eq0\}=\mcup_{j\in[1,s]}\{p_j\}\q\h{in}\,\,\,\,\PP^{n-1},
\end{equation*}
if the $g_j,h_j$ are sufficiently general. Here the $h_j$ are chosen after the $g_j$.
\sk
We now consider the family
\begin{equation*}
\X:=\bl\{G\defs\msum_{j=1}^s\,t_j\one g_j^2h_j=0\br\}\subset\PP^{n-1}{\times}S,
\end{equation*}
where $S\defs\C^s$ with coordinates $t_1,\dots,t_s$. Let $\widetilde{\PP}^{n-1}$ denote the blowup of $\PP^{n-1}$ along the union of $\{p_j\}$ for $j\ins[1,s]$. Let $\widetilde{\X}$ be the proper transform of $\X$ in $\widetilde{\PP}^{n-1}{\times}S$. We see that $\widetilde{\X}$ is smooth (since $\dd_{t_j}G\eq g_j^2h_j$), and so is a general fiber of the projection $\widetilde{\X}\tos S$. The assertion is thus proved in the first case.
\sk
The argument is similar for the second case in view of Remark~\ref{R4.7} below where $d$ must be replaced by $d{-}2$. We may assume $d{-}2\gess2$, since we get $s\eq1$ from the inequality if $d\eq3$. We may then assume $s\sgt n$, take $g_j\in\Sc_{n,d-2}(p_1,\dots,\widehat{p_j},\dots,p_s)$ for $j\ins[1,s]$ (since $N_{n,d-2}\gess sn\sgt(s{-}1)n$), and argue as above. This finishes the proof of Theorem~\ref{T2}.
\end{proof}

\begin{rem} \label{R4.7}
Assume the $s$ points are contained in $\{x_n\ne0\}\sst\PP^{n-1}$ and $d\gess2$. Fixing the projective coordinates $x_1,\dots,x_n$, we have for $i\ins[1,n{-}1]$ the embeddings
\begin{equation*}
\iota_d^{(i)}:\PP^{n-1}\ni[x_1\col\dots\col x_n]\mapsto[\dd_{x_i}x^{\nu}x_n]_{|\nu|=d}\in\PP^{N_{n,d}-1},
\end{equation*}
where $|\nu|\defs\msum_{i=1}^n\,\nu_i$. We use the projective coordinates $(y_{\nu})_{\nu|=d}$ of $\PP^{N_{n,d}-1}$ associated with the projective coordinates $x_1,\dots,x_n$ of $\PP^{n-1}$ so that the Veronese embedding $\iota_d$ can be given by $y_{\nu}\eq x^{\nu}$ for any $\nu\ins\N^n_{[d]}$, where $\N^n_{[d]}\defs\{\nu\ins\N^n\,{|}\,|\nu|\eq d\}$. The condition that the partial derivatives of a homogeneous polynomial vanish at the given $s$ points is given by the orthogonal relations to the images of the $s$ points by $\iota_d,\iota_d^{(1)},\dots,\iota_d^{(n-1)}$, using the equality $f\eq\msum_{i=1}^n\,x_i\one\dd_{x_i}f$ and restricting to $x_n\eq1$. Indeed, if $f\eq\msum_{\nu\in\N^n_{[d]}}a_{\nu}x^{\nu}$, we have
\begin{equation*}
\dd_{x_i}f\eq\msum_{\nu\in\N^n_{[d]}}a_{\nu}\dd_{x_i}x^{\nu}\q\h{for}\,\,\,i\ins[1,n{-}1],
\end{equation*}
even after restricting to $x_n\eq1$. The $a_{\nu}$ for $\nu\ins\N^n_{[d]}$ can be viewed as projective coordinates of the dual projective space $\check{\PP}^{N_{n,d}-1}$
\sk
For a subset $I\sst\{1,\dots,s\}$, let $V_I\sst\PP^{N_{n,d}-1}$ be the linear subspace spanned by $\iota_d(p_j)$, $\iota_d^{(1)}(p_j),\dots,\iota_d^{(n-1)}(p_j)$ for $j\ins I$. We have
\begin{equation*}
V_I\cap V_J\eq V_{I\cap J}\q\h{for}\,\,\,\,I,J\sst\{1,\dots,s\},
\end{equation*}
using \cite[Theorem 2]{AH}, see Remark~\ref{R4.6}.
\sk
Assume $p_1\eq(0,\dots,0)$, and $p_{j+1}\ins\mcap_{i\ne j}\{x_i\eq0\}\sst\PP^{n-1}\stm\{x_n\eq0\}$ for $j\ins[1,n{-}1]$. Then
\begin{equation*}
x_i^2x_n^{d-2}\in\Sc_{n,d}(p_1,\dots,p_j)\q\h{for any}\,\,i\in[j,n{-}1],
\end{equation*}
hence we get
\begin{equation*}
\dim\bl(V_{\{1,\dots,j\}}\cap\iota_d(\PP^{n-1})\stm\{x_n\eq0\}\br)<j\q\h{for any}\,\,j\ins[1,n{-1}],
\end{equation*}
where the coordinate $x_n$ can be changed as long as $p_j\notin\{x_n\eq0\}$ for any $j\ins[1,s]$.
\sk
Note that, for any subset $I\sst\{1,\dots,s\}$ with $|I|\sgt s\mi n\sgt0$, there is a subset $I'\sst I$ with $|I'|\eq|I|\mi s\pl n$ and such that the $p_j$ for $j\ins I'$ span a linear subspace of dimension $|I'|\mi1$ in $\PP^{n-1}$ by decreasing induction on $|I|$ if the $s$ points are in 1-general position with $s\sgt n$.
\end{rem}

\begin{exam} \label{E4.8}
Assume $0\slt s\less n$ and $s$ points $p_1,\dots,p_s$ span a linear subspace of $\PP^{n-1}$ of dimension $s{-}1$. Then the $s$ points are in general position and we have $p_f\eq q_s\eq2$ if $s\eq n$, and $p_f\eq q_s\eq1$ otherwise. Assume $0\slt s\less n$ and $s$ points span a linear subspace of $\PP^{n-1}$ of dimension strictly smaller than $s{-}1$. Then the $s$ points are not in general position and we have always $p_f\eq1$.
\sk
As a concrete example for the first case with $s\eq n$, we have $f\eq\msum_{1\les i<j\les n}\,x_i^2x_j^2$ for any $n\gess3$. For the second case with $s\eq n$, one may consider for instance
\begin{equation*}
\begin{aligned}
&f=x^2y^2{+}x^2z^2{+}y^2z^2{-}(x{+}y{+}z)xyz{+}
(x^2{+}y^2{+}z^2)w^2{-}w^4\q\q\h{or}\\
&f=x^2y^2{+}x^2z^2{+}y^2z^2{-}(x{+}y{+}z)xyz{+}
(x^2{+}y^2{+}z^2)w^2{+}2(x^2{+}y^2{+}z^2{+}w^2)v^2{+}v^4,
\end{aligned}
\end{equation*}
where $n\eq4$ or 5. These can be verified using Macaulay2 as follows.
\ms
\vbox{\q\q\small\tt\verb#R=QQ[x,y,z,w,v]; f=x^2*y^2+x^2*z^2+y^2*z^2-(x+y+z)*x*y*z+#
\sk\q\q
\verb#(x^2+y^2+z^2)*w^2+2*(x^2+y^2+z^2+w^2)*v^2+v^4;#
\sk\q\q
\verb#I=radical ideal(jacobian ideal f); decompose I#}
\skn
see also the code at the end of Appendix in \cite{wh} calculating the $\mu'_k$ and $\de_k$.
\end{exam}

\begin{rem} \label{R4.9}
One can get homogeneous polynomials whose derivatives vanish at given points by using Singular \cite{Sing} as follows (where $(n,d,s)\eq(4,4,8)$).
\ms
\vbox{\scriptsize\tt\verb#LIB "general.lib"; LIB "control.lib";#
\sk
\verb#ring S=0,(x,y,z,w),ds; int a,b,i,j,k,p,q,d,n,r,s,N; list I; n=4; d=4;#
\sk
\verb#poly f; N=int(factorial(d+n-1)div factorial(n-1)div factorial(d));#
\sk
\verb#intmat L[N][n]; intmat D[n][n]; for(i=1; i<=n; i++) {D[i,i]=1;}#
\sk
\verb#for(i=0; i<=d; i++) {for(j=0; j<=d-i; j++) {for(k=0; k<=d-i-j; k++) {I=d,i,j,k;#
\sk
\verb#r=(I[1]-I[2]+2)*(I[1]-I[2]+1)*(I[1]-I[2]) div 6+ (I[1]-I[2]-I[3]+1)*(I[1]-I[2]-I[3])#
\sk
\verb#div 2+I[1]-I[2]-I[3]-I[4]+1; L[r,1]=i; L[r,2]=j; L[r,3]=k; L[r,4]=1;}}}#
\sk
\verb#s=8; intmat P[s][n-1]=0,0,0, 1,0,0, 0,1,0, 0,0,1, -1,2,-3, 4,-3,2, 2,-3,-6, -2,4,1;#
\sk
\verb#intmat R[s*n][N]; for (i=1;i<=s; i++){ for (j=1; j<=N; j++) {for (k=1; k<=n; k++)#
\sk
\verb#{if (L[j,k]>0){R[n*(i-1)+k,j]=L[j,k]; for (p=1;p<=n-1; p++) {R[n*(i-1)+k,j]=#
\sk
\verb#R[n*(i-1)+k,j]*P[i,p]^(L[j,p]-D[k,p]);}}}}} matrix K=rightKernel(R); r=size(K)div N;#
\sk
\verb#matrix F[r][n]; for(p=1; p<=r; p++) {for(i=1; i<=N; i++){F[p,n]=F[p,n]+K[i,p]*#
\sk
\verb#x^(L[i,1])*y^(L[i,2])*z^(L[i,3])*w^(d-L[i,1]-L[i,2]-L[i,3]);} F[p,1]=diff(F[p,n],x);#
\sk
\verb#F[p,2]=diff(F[p,n],y); F[p,3]=diff(F[p,n],z); for(j=1; j<=s; j++){for(k=1; k<=n; k++)#
\sk
\verb#{if(subst(F[p,k],x,P[j,1],y,P[j,2],z,P[j,3],w,1)!=0){sprintf("Nonzero at %s,%s, %s",#
\sk
\verb#p,j,k);}}}} a=2; b=3; for(i=1; i<=r; i++) {f=f+(-1)^i*(a*i^2-b*i+1)*F[i,n]; F[i,n];}#
\sk
\verb#r; ideal J=jacob(f); size(kbase(std(J),n*d-n)); size(kbase(std(J),n*d-n+1));#}
\msn
This gives three homogeneous polynomials in 4 variables of degree 4 which are linearly independent and whose partial derivatives vanish at the given eight points. We see that these eight points are in 4-AH-general position, since the size of the matrix {\small\tt\verb#K#} divided by $N_{n,d}$ is the correct one, that is, 3 (which is given by the third last output for {\small\tt\verb#r#}). One can verify that any linear combination of these polynomials restricted to $w\eq1$ does not have an ordinary double point at 0 by applying {\small\tt\verb#subst(f,z,z-33y,y,y+12x,w,1);#} to {\small\tt\verb#f=F[1,n],F[2,n],F[3,n]#}, since their degree 2 terms become respectively $z^2,yz,y^2$. (This is quite compatible with Remark~\ref{R3}.) Any linear combination of them has non-isolated singularities by Remark~\ref{R3}, and this agrees with a computation of the Hilbert polynomial of the Jacobian ring in the last part of the above code, see also \cite[Remark 3.7]{nwh}. Indeed, the last two outputs must be the same and coincide with the sum of the Tjurina numbers if $X\defs\{f\eq0\}\sst\PP^{n-1}$ has only isolated singularities. (Note that this does not imply the same property for {\it any\one} other $s$ points in 4-AH-general position as shown at the end of this remark.) When we calculate the case $(n,d,s_{4,d})\eq(4,5,13)$ by adding the input, we get the expected total Tjurina number 13 twice after $r\eq4$.
\sk
If one replaces {\small\tt\verb#s=8#} with {\small\tt\verb#s=7#} (without changing {\small\tt\verb#d=4#}), then one obtains seven polynomials, since the first seven points of the above eight points are also in 4-AH-general position. Here {\small\tt\verb#f#} in {\small\tt\verb#jacob(f)#} must be a ``general" linear combination of them. If it is appropriate, the last two outputs would be 7, 7, which is the number of ordinary double points in this case.
\sk
It may be possible to compute the exceptional cases in \cite[Theorem 2]{AH}, for instance $(n,d,s)\eq(4,4,9)$ (where $n$ in our paper is equal to $n{+}1$ in their paper) if one can add some points to the input {\it appropriately\one} so that the correct value of {\small\tt\verb#r#} in the third last output (which is 1) is obtained.
\sk
If one likes to calculate the case $(n,d,s)\eq(6,3,9)$, the above code must be modified appropriately by replacing some part with (or adding) the following\,:
\ms
\vbox{\scriptsize\tt\verb#ring S=0,(x,y,z,u,v,w),ds; int a,b,i,j,k,p,q,d,n,r,s,N; list I; n=6; d=3;#
\sk
\verb#for(i=0; i<=d; i++) {for(j=0; j<=d-i; j++) {for(k=0; k<=d-i-j; k++)#
\sk
\verb#{for(p=0; p<=d-i-j-k; p++) {for(q=0; q<=d-i-j-k-p; q++) {I=d,i,j,k,p,q;#
\sk
\verb#r=(I[1]-I[2]+4)*(I[1]-I[2]+3)*(I[1]-I[2]+2)*(I[1]-I[2]+1)*(I[1]-I[2]) div 120+ #
\sk
\verb#(I[1]-I[2]-I[3]+3)*(I[1]-I[2]-I[3]+2)*(I[1]-I[2]-I[3]+1)*(I[1]-I[2]-I[3]) div 24+#
\sk
\verb#(I[1]-I[2]-I[3]-I[4]+2)*(I[1]-I[2]-I[3]-I[4]+1)*(I[1]-I[2]-I[3]-I[4]) div 6+#
\sk
\verb#(I[1]-I[2]-I[3]-I[4]-I[5]+1)*(I[1]-I[2]-I[3]-I[4]-I[5]) div 2+I[1]-I[2]-I[3]-I[4]#
\sk
\verb#-I[5]-I[6]+1; L[r,1]=i; L[r,2]=j; L[r,3]=k; L[r,4]=p; L[r,5]=q; L[r,6]=1;}}}}}#
\sk
\verb#s=9; intmat P[s][n-1]=0,0,0,0,0, 1,0,0,0,0, 0,1,0,0,0, 0,0,1,0,0, 0,0,0,1,0,#
\sk
\verb#0,0,0,0,1, -1,2,-3,4,-5, 2,-5,4,-3,-1, -3,4,-1,5,-2;#
\sk
\verb#*u^(L[i,4])*v^(L[i,5])*w^(d-L[i,1]-L[i,2]-L[i,3]-L[i,4]-L[i,5])#
\sk
\verb#F[p,4]=diff(F[p,n],u); F[p,5]=diff(F[p,n],v);   u,P[j,4],v,P[j,5],#}
\msn
In the case $n\eq3$ the changes may be as follows.
\ms
\vbox{\scriptsize\tt\verb#ring S=0,(x,y,w),ds; int a,b,i,j,k,p,q,d,n,r,s,N; list I; n=3; d=10;#
\sk
\verb#for(i=0; i<=d; i++) {for(j=0; j<=d-i; j++) {I=d,i,j; r=(I[1]-I[2]+1)*#
\sk
\verb#(I[1]-I[2]) div 2+ I[1]-I[2]-I[3]+1; L[r,1]=i; L[r,2]=j; L[r,3]=1;}}#
\sk
\verb#s=21; intmat P[s][n-1]=0,0, 4,3, 4,1, 4,-1, 4,-3, 2,4, 2,2, -3,3, 2,-2, 2,-4,#
\sk
\verb#-3,1, -3,-1, -3,-3, -2,4, 1,3, 1,1, 1,-3, 2,0, -2,2, -2,-2, -2,-4;#
\sk
\verb#x^(L[i,1])*y^(L[i,2])*w^(d-L[i,1]-L[i,2])#}
\msn
Here {\scriptsize\tt\verb#F[p,3]=diff(F[p,n],z); z,P[j,3],#} {\it must be deleted.}
One can then calculate also the cases $(d,s_{3,d})\eq(9,18),(8,14),(7,11),(6,9),(5,6),(4,4)$ by changing the input appropriately. Here $r_{3,d}\eq1$ if $\tfrac{d}{3}\ins\Z$, and 3 otherwise. The case $(d,s_{3,d})\eq(7,11)$ seems rather complicated, since we may get an example having isolated singularities whose total Tjurina number is not the wanted one although the value of {\small\tt\verb#r#} is correct. (It is not necessarily easy to find $s_{n,d}$ points in $d$-AH-general position. This can be judged by looking at the value of {\small\tt\verb#r#} which is given in the third last output.)
\sk
It turns out however that the example for the case $(n,d,s)\eq(3,7,11)$ in Remark~\ref{R4} had no problem, hence the property that a general $f\ins\Sc_{n,d}(p_1,\dots,p_{s_{n,d}})$ has ordinary double points at $p_1,\dots,p_{s_{n,d}}$ and is nonsingular outside them after setting $x_n\eq1$ depends on the points $p_1,\dots,p_{s_{n,d}}$ in $d$-AH-general position. We note here some example with $(n,d,s)\eq(3,7,11)$:
\ms
\vbox{\scriptsize\tt\verb#P[s][n-1]=0,0, 1,2, 1,-2, -2,-1, 1,4, 2,2, -3,3, 2,-2, 2,-4, -3,1, -3,-1;#
\sk
\verb#P[s][n-1]=0,0, 1,2, 1,-2, -2,-1, 2,4, 2,2, -3,3, 2,-2, 2,-4, -3,1, -3,-1;#}
\msn
These two give the total Tjurina numbers 11 and 13 respectively although $r\eq3$ for both. (Here the integers {\small\tt\verb#a,b#} for a linear combination of the {\small\tt\verb#F[i,n]#} must be modified sufficiently in order to get a general one.) This means that any $s$ points in $d$-AH-general position are not necessarily in {\it sufficiently\one} general position.
\end{rem}

\section{Extension to singularities of type A{\scriptsize c}} \label{S5}
It seems quite nontrivial to extend Proposition~\ref{P1} and Theorem~\ref{T2} to the case where $X$ has only $A_c$-{\it singularities\one} with Milnor number $c$ for a fixed $c\ins\Z_{>1}$ by using Remark~\ref{R3.5} even for $c\eq2$. Here the singular locus $\Si$ of $X$ is {\it non-reduced,} and we have to indicate the ``direction" of nilpotent elements of $\OO_{\Si,p_j}$ for each $p_j\ins\Si$. This can be done by choosing a local analytic vector field $\xi_j$ not vanishing at $p_j$: A germ of a holomorphic function $g\ins\OO_{\PP^{n-1},p_j}$ belongs to the ideal $\I_{\Si,p_j}$ of $\Si$ if and only if
\begin{equation} \label{5.1}
(\xi_j^k(g))(p_j)\eq0\q\h{for any}\,\,\,\,k\ins[0,c{-}1],
\end{equation}
where $\xi_j^k(g)\ins\OO_{\PP^{n-1},p_j}$.

\begin{rem} \label{R5.1}
In \eqref{5.1} we may replace $\xi_j$ with $u\xi_j$ for an invertible $u\ins\OO^*_{\PP^{n-1},p_j}$, using the relation $[\xi_j,u]\eq\xi_j(u)$. We may also replace $\xi_j$ with $\xi_j\pl\eta$ if $\eta$ vanishes at any point of the integral curve $C_j$ of $\xi_j$ passing through $p_j$. (Indeed, $[\xi_j,\eta]$ vanishes at any point of $C_j$, taking local coordinates $y_1,\dots,y_{n-1}$ around $p_j$ such that $\xi_j\eq\dd_{y_1}$, $C_j\eq\mcap_{2\les i\les n-1}\{y_i\eq0\}$.) So we may replace $\xi_j$ with $\xi'_j$ if their integral curves passing through $p_j$ coincide.
\end{rem}

Set
\begin{equation*}
q_{c,s}\defs\min\{q\ins\N\,|\,N_{n,q}\sgt cs\}.
\end{equation*}
Theorem~\ref{T1} can be extended to the case where $X$ has only $A_c$-singularities assuming $\tau_X\eq cs$ is not very large, see Remark~\ref{R3.5}, hence the equality~\eqref{1} holds with $p_f$ replaced by some positive integer $p_{f,c}$ as follows:
\begin{equation*}
\R_f=\tfrac{1}{d}\one\bl(\Z\cap[n,nd\mi n\mi p_{f,c}]\br)\cup\R_X,
\end{equation*}
As in \eqref{2} and \eqref{3}, we get the {\it upper bound}
\begin{equation} \label{5.2}
p_{f,c}\less q_{c,s}\q\h{in the case}\,\,\,q_{c,s}\less d{-}2.
\end{equation}
It seems however quite unclear whether this is optimal, especially when $c$ is close to $d$. The difficulties seem twofold.
\sk
Set $m_c\defs\lfloor c/2\rfloor$, $m'_c\defs c\pl1\pl(m_c{+}1)(n{-}2)$.
For $h\ins\OO_{\PP^{n-1},p_j}$, the condition that $h$ has at least $A_c$-singularity along $\xi_j$ at $p_j$ can be given by the following $m'_c$ conditions:
\begin{equation} \label{5.3}
(\xi_j^k(h))(p_j)\eq(\xi_j^{k'}\dd_{y_i}(h))(p_j)\eq0\q\h{for}\,\,k\ins[0,c],\,k'\ins[0,m_c],\,i\ins[2,n{-}1],
\end{equation}
calculating the {\it Newton boundary\one} of the $A_c$-singularity, where $y_1,\dots,y_{n-1}$ are analytic local coordinates of $\PP^{n-1}$ at $p_j$ such that $\xi_j\eq\dd_{y_1}$. Indeed, for $h\eq\msum_{\nu\in\N^{n-1}}\,c_{\nu}y^{\nu}$, these conditions mean that $c_{\nu}\eq0$ if $|\nu|\eq\nu_1\less c$ or $|\nu|\eq\nu_1{+}1\less m_c{+}1$.

\begin{rem} \label{R5.2}
In \eqref{5.3} one may replace the $\dd_{y_i}$ with vector fields $\eta_{j,i}$ around $p_j$ for $i\ins[2,n{-}1]$ such that the $\eta_{j,i}$ and $\xi_j$ generate $\Theta_{\PP^{n-1}}$, where the latter denotes the module of holomorphic vector fields around $p_j$ to simplify the notation. Indeed, $\xi_j^a\Theta_{\PP^{n-1}}\xi_j^{q-a}\sst\D_{\PP^{n-1}}$ is independent of $a\ins[0,q]$ modulo $\msum_{k=0}^{q-1}\,\Theta_{\PP^{n-1}}\xi_j^k$, since $\Theta_{\PP^{n-1}}$ is a Lie algebra. The condition~\eqref{5.3} may be replaced with 
\begin{equation*}
h(p_j)\eq(\xi_j^k(h))(p_j)\eq(\eta_{j,i}\xi_j^{k'}(h))(p_j)\eq0,
\end{equation*}
for $k\ins[m_c{+}2,c],\,k'\ins[0,m_c],\,i\ins[1,n{-}1]$, if vector fields $\eta_{j,i}$ for $i\ins[1,n{-}1]$ generate $\Theta_{\PP^{n-1}}$ around $p_j$. Here $\eta_{j,1}$ is not necessarily equal to $\xi_j$, and we assume this for the relation with the above condition.
\end{rem} 

Let $\Sc_{n,d,c}(p_1,\xi_1,\dots,p_s,\xi_s)$ be the vector space of homogeneous polynomials in $n$ variables of degree $d$ (together with 0) such that the associated projective hypersurface $X\sst\PP^{n-1}$ has at least $A_c$-singularities along $\xi_j$ at $p_j$ for any $j\ins[1,s]$. By the above argument we have the inequality
\begin{equation} \label{5.4}
\dim\Sc_{n,d,c}(p_1,\xi_1,\dots,p_s,\xi_s)\ges\max(N_{n,d}-s\one m'_c,0),
\end{equation}
but the answer to the following fundamental question seems quite nontrivial. 

\begin{ques} \label{Q5.3}
Does the hypersurface defined by a general member of $\Sc_{n,d,c}(p_1,\xi_1,\dots,p_s,\xi_s)$ have $A_c$-singularities at $p_1,\dots,p_s$ with no other singular points if $N_{n,d'}\mi s\one m'_c\gess\ga$ for some small positive integer $\ga$, assuming $(p_1,\xi_1),\dots,(p_s,\xi_s)$ are sufficiently general?
\end{ques}

Set $d'\defs d\mi d_{n,c}$, where $d_{n,c}$ is the {\it minimal\one} positive integer such that there is a projective hypersurface of degree $d_{n,c}$ in $\PP^{n-1}$ which has one $A_c$-singularity and is smooth outside it. (Note that the automorphism group of $\PP^{n-1}$ acts transitively on $\PP^{n-1}$.) We have $d_{n,2}\eq3$ for any $n\ges3$, however, it seems rather difficult to determine $d_{n,c}$ in general, especially when $c$ is large. One might have to {\it replace\one} $d_{n,c}$ with the degree of a hypersurface whose singularities satisfy the above condition, forgetting the minimality. Contrary to the ordinary double point case \cite{AH}, the answer to the following question seems quite unclear.

\begin{ques} \label{Q5.4}
Does the equality hold in \eqref{5.4} if $N_{n,d'}\mi s\one m'_c\gess\be$ with $\be$ a small non-negative integer, assuming $(p_1,\xi_1),\dots,(p_s,\xi_s)$ are sufficiently general?
\end{ques}

In the case $n\eq3$, $c\eq2$ (hence $m'_c\eq5$) with $(d,s)\eq(3,2)$ so that $N_{n,d}\eq s\one m'_c$, the left-hand side of \eqref{5.4} seems to be 1, although the right-hand side vanishes. Note however that the {\it equality\one} holds for $(d,s)\eq(4,3)$, $(8,9)$, $(9,11)$ among others. Here it does not seem easy to assure that the input points and vectors are {\it sufficiently general.} As a consequence it is unclear whether the {\it linear independence\one} of the conditions holds as in the ordinary double point case, assuming $N_{n,d}\gess s\one m'_c$, see Remark~\ref{R4.6} for its importance. This problem however does not seem so crucial as the one in Question~\ref{Q5.5} below, since we can {\it restrict\one} to relatively small $s$ (in other words, replace $d$ by a bigger degree with $s$ fixed).
\sk
Let $\Sc'_{n,e,c}(p_1,\xi_1,\dots,p_s,\xi_s)$ be the vector space of homogeneous polynomials in $n$ variables of degree $e$ (together with 0) such that the corresponding section of $\OO_{\PP^{n-1}}(e)$ belongs to the ideal of the non-reduced singular locus of $A_c$-singularities along $\xi_j$ at $p_j$. This is independent of a local trivialization of $\OO_{\PP^{n-1}}(e)$. We have the inequality
\begin{equation} \label{5.5}
\dim\Sc'_{n,e,c}(p_1,\xi_1,\dots,p_s,\xi_s)\ges\max(N_{n,e}\mi cs,0),
\end{equation}
but the answer to the following question seems unclear.

\begin{ques} \label{Q5.5}
Does the equality hold in \eqref{5.5} for any integer $e\slt q_{c,s}$ (so that the right-hand side of \eqref{5.5} is equal to 0), assuming $(p_1,\xi_1),\dots,(p_s,\xi_s)$ are sufficiently general?
\end{ques}

This equality is {\it indispensable\one} to extend the proof of the {\it sharpness\one} of the upper bound to the $A_c$-singularity case, and was {\it not\one} difficult in the ordinary double point case, that is, if $c\eq1$, see Proposition~\ref{P4.4}. This could be a {\it serious\one} problem, since the condition $e\slt q_{c,s}$ is {\it completely determined by\one} $n,c,s$, and {\it cannot be replaced.} No counterexample to Question~\ref{Q5.5} is nevertheless known for the moment as far as computations are made including the cases $(e,c,s)\eq(3,5,2)$, $(3,2,5)$, $(4,5,3)$, $(4,3,5)$ with $n\eq3$ (where $N_{n,e}\mi cs\eq0$) among many others. It seems however quite unclear how this can be proved in general even in the case $c\eq2$ by modifying an argument in \cite{AH}.

\begin{rem} \label{R5.6}
By an argument similar to the proof of Theorem~\ref{T2}, it is possible to extend Proposition~\ref{P1} and Theorem~\ref{T2} for given $n,d,c,s$ if Questions~\ref{Q5.4} and \ref{Q5.5} have positive answers for these numbers. Here $d_{n,c}$ may be replaced with the degree of a hypersurface whose singularities are described as above forgetting the minimality (since it does not necessarily seem easy to determine it in general). This is a generalization of Theorem~\ref{T2} for the {\it second\one} hypothesis.
\end{rem}

Corresponding to the {\it first\one} assumption of Theorem~\ref{T2}, set $d''\defs\lfloor(d\mi d_{n,c})/2\rfloor$. Here $d_{n,c}$ may be replaced by an integer obtained by forgetting the minimality as above. We have the following.

\begin{ques} \label{Q5.7}
Does the equality hold in \eqref{5.5} for $e\eq d''$ if $N_{n,d''}\mi cs\gess0$
(for instance if $d''\gess q_{c,s}$ so that $N_{n,d''}\mi cs\sgt0$), assuming $(p_1,\xi_1),\dots,(p_s,\xi_s)$ are sufficiently general?
\end{ques}

\begin{rem} \label{R5.8}
One can extend Proposition~\ref{P1} and Theorem~\ref{T2} for the {\it first\one} hypothesis if Questions~\ref{Q5.5} and \ref{Q5.7} have positive answers. Here {\it only\one} the inequality \eqref{5.5} is concerned, since we have $f^2\ins\Sc_{n,2e,c}(p_1,\xi_1,\dots,p_s,\xi_s)$ for $f\ins\Sc'_{n,e,c}(p_1,\xi_1,\dots,p_s,\xi_s)$, calculating the Newton boundaries. Comparing the case $c\eq2$ with \cite{AH}, one may have the following.
\end{rem}

\begin{conj} \label{C5.9}
If $c\eq2$, one can extend Proposition~\ref{P1} and Theorem~\ref{T2} under the {\it first\one} hypothesis to the case where $X$ has only $A_2$-singularities (with $d_{n,2}\eq3$).
\end{conj}

There is no counterexample as far as computations are made. Note that \cite{AH} is not needed for the proof of Theorem~\ref{T2} under the {\it first\one} hypothesis. Assume for instance $(n,d,c,s)\eq(3,9,2,5)$, where $d''\eq3$, $q_{2,5}\eq4$, since $N_{3,3}\eq10$. It is then enough to study Question~\ref{Q5.5} for $(n,e,c,s)\eq(3,3,2,5)$, since the case $e\eq2$ or 1 is trivial. Running the code for Singular \cite{Sing} as below, we obtain $r\eq0$ as expected. Here the input points and vectors could be modified $2^9$ times, although the computation stops as soon as it hits the case with $r\eq\max(N_{n,e}\mi cs,0)$ ten times. (Note that the good cases are generic.) Proposition~\ref{P1} and Theorem~\ref{T2} for the first hypothesis are thus generalized when $(n,d,c,s)\eq(3,9,2,5)$. If $(n,d,c,s)\eq(3,11,2,5)$, we have to run the code for $(n,e,c,s)\eq(3,4,2,5)$ and $(3,3,2,5)$. Questions~\ref{Q5.5} and \ref{Q5.7} for $c\eq2$ are examined by running (variants of) the code in the cases where $(e,s)\eq(2,3)$, $(3,5)$, $(5,10)$, $(5,11)$, $(6,14)$ with $n\eq3$, $(e,s)\eq(3,10)$, $(4,17)$, $(4,18)$ with $n\eq4$, and $(e,s)\eq(3,17)$, $(3,18)$, $(4,35)$ with $n\eq5$ (satisfying $|N_{n,e}\mi cs|\less1$) among others. Some of these may be related to the exceptional cases in \cite{AH}. We have the flexibility of {\it directions\one} of vector fields in our case compared with that paper. Note that $\mm^2_{p_j}\sst\I_{\Si,p_j}\sst\mm_{p_j}$ with $\mm_{p_j}\sst\OO_{\PP^{n-2},p_j}$ the maximal ideal if $c\eq2$, and $\mm^2_{p_j}$ cannot be equal to $\I_{\Si,p_j}$ for any singularity of $X$, since it is not a complete intersection, that is, not generated by $n{-}1$ elements. (This code is not necessarily good for the case $c\nes2$.)
\ms
\vbox{\scriptsize\tt\verb#LIB "control.lib";ring T=0,(x,y),ds; int a,b,c,e,i,j,k,l,m,n,p,q,r,s,u,C,M,N;#
\sk
\verb#a=3; b=2; u=4; n=3; e=3; c=2; s=5; intmat P[s][n-1]=0,0, 6,0, 1,9, 13,8, -7,12;#
\sk
\verb#list U,V,W; for (i=1; i<=s; i++) {V[i]=(-1)^i*2*i;} intmat B[e+n][e+n]; B[1,1]=1;#
\sk
\verb#for (i=2; i<=e+n; i++) {for (j=1;j<=i; j++) {if(j>1){B[i,j]=B[i-1,j]+B[i-1,j-1];}#
\sk
\verb#else {B[i,j]=B[i-1,j];}}} N=B[e+n,n]; intmat L[N][n]; list I; for(i=0; i<=e; i++)#
\sk
\verb#{for(j=0; j<=e-i; j++) {I=e,i,j; r=(I[1]-I[2]+1)*(I[1]-I[2]) div 2+I[1]-I[2]-I[3]#
\sk
\verb#+1; L[r,1]=i; L[r,2]=j;}} matrix R[s*c][N]; intmat Q[s][n-1]; matrix K; poly f,g;#
\sk
\verb#M=2^(3*s-6); for(k=0; k<M&&C<10; k++) {m=k; for (i=1; i<=s; i++){U[i]=V[i]+m%2*u;#
\sk
\verb#if (i>3) {Q[i,1]=P[i,1]+m div 2%2*a; Q[i,2]=P[i,2]+m div 4%2*b; m=m div 8;} else{#
\sk
\verb#Q[i,1]=P[i,1]; Q[i,2]=P[i,2]; m=m div 2;} for (j=1; j<=N; j++) {for (q=0; q<=c-1;#
\sk
\verb#q++) {g=x^(L[j,1])*y^(L[j,2]); for (l=1; l<=q;l++) {g=diff(g,x)+U[i]*diff(g,y);}#
\sk
\verb#R[c*(i-1)+q+1,j]=subst(g,x,Q[i,1],y,Q[i,2]);}}} K=rightKernel(R); r=size(K) div N;#
\sk
\verb#q=N-s*c; if (q<0) {q=0;} for(p=1; K[p,1]==0 && p<N; p++){;} if (p==N && K[p,1]==0) #
\sk
\verb#{r=0;} r;if(q==r){C++;W[C]=k+1;}} if(C>0){sprintf("r=max(N-sc,0)=%s at %s",q,W);};#}
\msn
{\it It may be necessary to replace the characters {\small\tt\verb#^,"#} with the ones from a keyboard, especially in some published papers from M.M. and M.Z.}

\end{document}